\documentclass[12pt]{article}
\usepackage{latexsym}
\usepackage{amssymb}
\usepackage{amsfonts}                     
\usepackage{amsmath}
\usepackage{amsthm}
\usepackage[all]{xy}
\usepackage{color}
\usepackage{mathrsfs}
\usepackage[utf8]{inputenc}
\usepackage[T1]{fontenc}
\usepackage{amscd}
\usepackage{amsxtra}
\usepackage{tikz-cd} 
\usepackage{tikz}
\usepackage{url}
\usepackage[a4paper, margin=2.5cm]{geometry}

\usepackage{hyperref} 
\hypersetup{
    colorlinks=true, 
    linkcolor=blue, 
    citecolor=blue, 
    urlcolor=blue    
}

\usetikzlibrary{positioning}
\newtheorem{thm}{\bf Theorem}[section]

\newtheorem{prop}[thm]{\bf Proposition}
\newtheorem{defn}[thm]{\bf Definition}
\newtheorem{defns}[thm]{\bf Definitions}

\newtheorem{rems}[thm]{\bf Remarks}
\newtheorem{exmp}[thm]{\bf Example}

\newtheorem{pb}[thm]{\bf Problem}
\newcommand{\field}[1]{\mathbb{#1}}

\newcommand{\N }{\field{N}}

\def\Sidem{\mathrm{S\text{-}idem}}
\def\Snil{\mathrm{S\text{-}nil}}
\def\Su{\mathrm{S\text{-}u}}
\def\Svnr{\mathrm{S\text{-}vnr}}
\def\Spreg{\mathrm{S\text{-}\pi\text{-}reg}}
\def\preg{\mathrm{\pi\text{-}reg}}

\def\Zero{\mathrm{Z}}
\def\u{\mathrm{u}}
\def\idem{{\rm idem}}
\def\nil{{\rm nil}}
\def\vnr{{\rm vnr}}
\def\reg{{\rm reg}}
\def\FP{{\rm FP}}
\def\z{{zero}}
\def\Max{{\rm Max}}
\def\rad{{\rm rad}}

\begin{document}
\title{On the $S$-version of some special elements in commutative rings}
\author{D. Bennis, A. Bouziri, S. D. Kumar, and T. Singh}
\date{}
\maketitle
 \begin{abstract}  In this paper, we introduce and study the $S$-versions of several fundamental elements in commutative rings. Specifically, for a commutative ring $R$  with identity and a multiplicative subset $S$, we define and investigate the notions of $S$-invertible, $S$-idempotent, $S$-von Neumann regular, and $S$-$\pi$-regular elements. We establish their basic properties, interrelations, and structural inclusions, and use them to characterize classes of rings. Special attention is given to the uniform $S$-counterparts of Boolean and $\pi$-regular rings, where we provide examples distinguishing these from their classical analogues. Several transfer results under homomorphisms and direct product constructions are established, and connections with existing $S$-counterparts (uniformly $S$-von Neumann regular, uniformly $S$-Artinian, etc.) are highlighted. Throughout the paper, we point out several open problems, offering directions for further research. \end{abstract}  
 

 {\scriptsize \textbf{Mathematics Subject Classification (2020)}:  13A15, 13A99, 13F99.}

 {\scriptsize \textbf{Key Words}:  $S$-idempotent element, $S$-von Neumann regular element, $S$-$\pi$-regular element, $S$-Boolean ring, $S$-$\pi$-regular ring.}

\section{Introduction}  

\hskip 0.5cm Throughout this paper, $R$ is a commutative ring with identity, $S$ is a multiplicative subset of $R$, and all modules are unitary. We denote by $\idem(R)$, $\vnr(R)$, $\preg(R)$, $\reg(R)$, $Z(R)$, and $\nil(R)$ the sets of idempotent elements, von Neumann regular elements, $\pi$-regular elements, regular elements, zero-divisors, and nilpotent elements of $R$, respectively.  

Recall that an element $a \in R$ is idempotent if $a^2 = a$, von Neumann regular if there exists $x \in R$ such that $a = xa^2$, $\pi$-regular if there exists $n \in \mathbb{N}^*$ such that $a^n = xa^{2n}$ for some $x \in R$, and nilpotent if $a^n = 0$ for some nonzero integer $n \in \mathbb{N}^*$. In \cite{And1}, Anderson and Badawi studied these elements and the relationships between their corresponding sets, particularly in certain special classes of rings, such as Boolean, von Neumann regular, and $\pi$-regular rings, and demonstrated that these rings can be characterized based on these relationships.   A ring $R$ is said to be Boolean (resp., von Neumann regular, $\pi$-regular) if every element of $R$ is idempotent (resp., von Neumann regular, $\pi$-regular).

   The generalization of classical notions using multiplicative subsets has attracted significant attention from researchers in homological algebra and ring theory in recent years. This approach was initiated by Anderson and Dumitrescu in 2002 when they introduced the concept of $S$-Noetherian rings, defined as rings in which every ideal $I$ is $S$-finite. That is, for some finitely generated subideal $I'$ of $I$, there exists an element $s \in S$ such that $s(I/I') = 0$ \cite{And6}. Since then, many researchers have investigated $S$-versions of various classical structures, including $S$-strong Mori domains \cite{Kim1}, $S$-coherent rings and modules \cite{Ben1}, $S$-Artinian rings and modules \cite{Sev1}, $S$-perfect rings \cite{Bou1}, $S$-flat modules \cite{Qi1}, and $S$-(FP-)injective modules \cite{Bou3, Bou4}. 

 Although many classical results have $S$-counterparts, in some cases, establishing such analogues has proven challenging. In particular, studying $S$-Noetherian rings from a module-theoretic perspective has posed significant difficulties.  This led to the development of what is known as the uniformly $S$-torsion theory, where the choice of an element from the multiplicative subset $S$ is made in a ``uniform'' manner. For example, a uniformly $S$-Noetherian ring is a ring for which there exists an element $s \in S$ such that every ideal $I$ of $R$ is $S$-finite with respect to $s$, meaning that $s(I/I') = 0$ for some finitely generated subideal $I'$ of $I$. This perspective allowed researchers to provide module-theoretic characterizations of uniformly $S$-Noetherian rings \cite{Min1}, as well as the "uniform" $S$-counterpart of many classical results (see, for instance, \cite{Zha4, Zha5, Zha6, Zha7, Zha8}).  However, even for uniformly $S$-versions, some of the uniform $S$-counterparts of classical results are not fully satisfactory. For instance, in \cite{Min1}, uniformly $S$-Noetherian rings were characterized by the assertion that the direct sum of injective modules is uniformly $S$-injective. However, a more suitable assertion seems to be that the direct sum of uniformly $S$-injective modules is uniformly $S$-injective. This remains an open question; see \cite{Bou6} for other related open problems. 
     
    It is also worth noting that recent attempts to study $S$-Noetherian and $S$-coherent rings from a module-theoretic perspective have been made by various researchers (see, for instance, \cite{Bou3, Bou4, Qi1, Zha1, Zha9}).

 In this paper, we aim to introduce and study the $S$-version of certain special elements in commutative rings. Specifically, we investigate the $S$-analogue of invertible, idempotent, von Neumann regular, and $\pi$-regular elements, and use these notions to characterize some classes of rings. Additionally, we introduce and analyze a uniform $S$-version of Boolean rings and $\pi$-regular rings, extending several related results. Furthermore, we provide examples to distinguish these new structures from the classical ones. 
 

 The paper is organized into two sections. In the first section, we introduce and study \( S \)-analogues of several classical types of elements in a ring, including invertible, von Neumann regular, and \(\pi\)-regular elements. We define the notions of \( S \)-invertibility, \( S \)-von Neumann regularity, and \( S \)-\(\pi\)-regularity, and examine their basic properties and interrelationships.

In the second section, we investigate rings in which these \( S \)-notions hold globally. Specifically, we introduce and study the classes of \( S \)-Boolean, \( S \)-von Neumann regular, and \( S \)-\(\pi\)-regular rings, along with their uniformly \( S \)-counterparts. We also explore the relationships between these different classes of rings and provide examples to distinguish them.

\section{\texorpdfstring{$S$}{S}-invertible elements and their generalizations}\label{section-2}
Recall from \cite{Ers1} that an element \( a \in R \) is called \emph{\( S \)-idempotent} if there exists \( s \in S \) such that \( sa = a^2 \). In this section, we introduce and investigate the \( S \)-versions of invertible, von Neumann regular, and \(\pi\)-regular elements.

\begin{defn}
Let \( S \) be a multiplicative subset of a commutative ring \( R \). Then,
\begin{enumerate}
\item An element \( a \in R \) is said to be \emph{\( S \)-invertible} if \( ab \in S \) for some \( b \in R \); that is, there exist \( b \in R \) and \( s \in S \) such that \( ab = s \).

\item An element \( a \in R \) is said to be \emph{\( S \)-von Neumann regular} if there exist \( b \in R \) and \( s \in S \) such that \( sa = a^2 b \).

\item An element \( a \in R \) is said to be \emph{\( S \)-\(\pi\)-regular} if there exist \( b \in R \), \( s \in S \), and \( n \in \mathbb{N} \) with \( n \geq 1 \), such that \( sa^n = a^{2n}b \).
\end{enumerate}

\end{defn}

We denote by \( \Su(R) \) (respectively, \( \Sidem(R) \), \( \Svnr(R) \), \( \Spreg(R) \)) the set of all \( S \)-invertible (respectively, \( S \)-idempotent, \( S \)-von Neumann regular, \( S \)-\(\pi\)-regular) elements of \( R \).

For each \( s \in S \), we denote by \( s\text{-}\u(R) \) (respectively, \( s\text{-}\idem(R) \), \( s\text{-}\vnr(R) \), \( s\text{-}\preg(R) \)) the set of all elements of \( R \) that are \( S \)-invertible (respectively, \( S \)-idempotent, \( S \)-von Neumann regular, \( S \)-\(\pi\)-regular) with respect to \( s \).

\begin{rems}\label{rems-follows-defn} The following assertions are straightforward to verify.
\begin{enumerate}
\item Any element \( s \in S \) is $S$-invertible, since \( s = s \cdot 1 \). 


\item Any invertible (resp., idempotent, von Neumann regular, $\pi$-regular) element of \( R \) is $S$-invertible (resp., $S$-idempotent, $S$-von Neumann regular, $S$-$\pi$-regular). The converses, however, do not hold in general. Moreover, the $\u(R)$-invertible (resp., $\u(R)$-von Neumann regular, $\u(R)$-$\pi$-regular) elements are precisely the invertible (resp., von Neumann regular, $\pi$-regular) elements of \( R \).

\item A $\u(R)$-idempotent element need not be idempotent. For example, let \( R \) be a von Neumann regular ring that is not Boolean. Then there exists \( a \in R \) which is not idempotent. By \cite[Theorem 2.2]{And1}, there exists an invertible element \( u \in R \) such that \( ua = a^2 \). Hence, \( a \) is a $\u(R)$-idempotent element that is not idempotent.

In fact, it follows from \cite[Theorem 2.2]{And1} that the $\u(R)$-idempotent elements are exactly the von Neumann regular elements of \( R \).

\item Every $S$-invertible element of \( R \) is $S$-von Neumann regular. Indeed, if \( ab = s \) for some \( a, b \in R \) and \( s \in S \), then multiplying both sides by \( a \) yields \( sa = a^2 b \).

\item Every $S$-idempotent element of \( R \) is $S$-von Neumann regular, and every $S$-von Neumann regular element is $S$-$\pi$-regular.
\end{enumerate} Thus, we have the following inclusions: \[S \subseteq \Su(R) , \quad 
 \idem(R) \cup S \subseteq \Sidem(R) \subseteq S \cup \Zero(R),
\] \[
\vnr(R)\cup  \Su(R) \cup \Sidem(R) \subseteq \Svnr(R) \subseteq \Su(R) \cup \Zero(R),
\] \[ \preg(R) \cup \Svnr(R) \cup \Snil(R) \subseteq \Spreg(R) \subseteq \Su(R) \cup \Zero(R). \] where 
\[
\Snil(R) = \{ r \in R \mid sr^n = 0 \text{ for some } n \in \mathbb{N}^* \text{ and } s \in S \}.
\]

\end{rems}

The following Example~\ref{2exmp-} shows that an \( S \)-invertible element does not necessarily belong to \( S \).

\begin{exmp}\label{2exmp-}  
  Let $R$ be an integral domain, and let $p$ be a prime element; let $s=p^2$ and set $S=\{s^n/n \in  \N\}$. Then $p$ is an $S$-invertible element which is not in $S$. 
 \end{exmp}
Recall from \cite{Ers1} that an element \( a \in R \) is said to be \( S \)-zero if there exists \( s \in S \) such that \( sa = 0 \).

Let \( P \) be a prime ideal of \( R \). An element \( r \in R \) is said to be \( P \)-invertible, \( P \)-idempotent, and so forth, if it is \( (R \setminus P) \)-invertible, \( (R \setminus P) \)-idempotent, and so on.

For a ring $R$, we denote by $\Max(R)$ the set of all maximal ideals of $R$. The following proposition highlights the connection between the classical notions and their \( m \)-relative counterparts, where \( M \) ranges over \( \Max(R) \).

\begin{prop}
The following assertions hold:
\begin{enumerate}
    \item \( \{0\} = \bigcap_{M \in \Max(R)} M\text{-}\z(R) \);
    \item \( \u(R) = \bigcap_{M \in \Max(R)} M\text{-}\u(R) \);
    \item \( \nil(R) = \bigcap_{M \in \Max(R)} M\text{-}\nil(R) \);
    \item \( \idem(R) \subseteq \bigcap_{M \in \Max(R)} M\text{-}\idem(R) \subseteq \vnr(R) \);
    \item \( \vnr(R) = \bigcap_{M \in \Max(R)} M\text{-}\vnr(R) \).
\end{enumerate}
\end{prop}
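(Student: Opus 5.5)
The common strategy is local–global: an element satisfying an equation ``up to a factor outside $M$'' for every maximal ideal $M$ generates, via those factors, an ideal contained in no maximal ideal, hence equal to $R$. I then patch the local witnesses into a global one. Throughout, the inclusions ``$\subseteq$'' are immediate from Remarks~\ref{rems-follows-defn}(2): take $s=1\in R\setminus M$. So the work is in the reverse inclusions.

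For (1), suppose $a$ is $M$-zero for every $M$, so $s_M a=0$ with $s_M\notin M$. The annihilator $\mathrm{ann}(a)$ then lies in no maximal ideal, so $\mathrm{ann}(a)=R$ and $a=0$. For (3), suppose $s_M a^{n_M}=0$ for each $M$. The same argument applies to the ideal $I=\{r\in R : ra^n=0 \text{ for some } n\}$, which is an ideal because annihilators of powers increase with $n$. Since $I$ is contained in no maximal ideal, $1\in I$ and $a$ is nilpotent. For (2), suppose $ab_M=s_M\notin M$ for each $M$. Then $aR\not\subseteq M$ for every $M$, so $aR=R$ and $a$ is a unit.

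For (5), suppose $s_M a=a^2b_M$ with $s_M\notin M$. The ideal $J$ generated by all the $s_M$ is $R$, so $1=\sum_{i} r_i s_{M_i}$ for a finite family. Multiplying by $a$ gives
\[
a=\sum_i r_i s_{M_i}a=a^2\sum_i r_ib_{M_i},
\]
so $a\in\vnr(R)$.

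For (4), the first inclusion is Remarks~\ref{rems-follows-defn}(2). For the second, $s_Ma=a^2$ means $s_M a=a^2\cdot 1$, so every $M$-idempotent element is $M$-von Neumann regular. Hence $\bigcap_M M\text{-}\idem(R)\subseteq\bigcap_M M\text{-}\vnr(R)=\vnr(R)$ by (5).

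The only step needing any care is (3), where the exponents $n_M$ vary with $M$. This is why I define $I$ with the exponent free, so that it is still an ideal. Alternatively, one may take the finite partition of unity $1=\sum r_is_{M_i}$ and the maximum exponent $N=\max_i n_{M_i}$, which gives $a^N=\sum_i r_is_{M_i}a^N=0$.
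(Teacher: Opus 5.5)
Your proof is correct and uses the same local-to-global idea as the paper: elements lying outside every maximal ideal generate $R$. The paper writes an explicit partition of unity $1=\sum r_i t_{M_i}$ in every item, while you phrase (1)--(3) as showing that a suitable ideal ($\mathrm{ann}(a)$, $aR$, $\bigcup_n \mathrm{ann}(a^n)$) lies in no maximal ideal; your explicit argument for (5), which the paper only sketches as ``similar'', and your derivation of the second inclusion in (4) from (5) are both fine.
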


\begin{proof}
(1)  Let \( a \in \bigcap_{M \in \Max(R)} M\text{-}\z(R) \). For each maximal ideal \( M \), there exists an element \( t_M \notin M \) such that \( t_M a = 0 \). Since the ideal generated by the family \( \{ t_M : M \in \Max(R) \} \) is equal to \( R \), there exist elements \( r_1, \dots, r_n \in R \) and maximal ideals \( M_1, \dots, M_n \in \Max(R) \) such that
\[
1 = r_1 t_{M_1} + \cdots + r_n t_{M_n}.
\]
It follows that
\[
a = r_1 t_{M_1} a + \cdots + r_n t_{M_n} a = 0.
\]

(2) Let \( a \in \bigcap_{M \in \Max(R)} M\text{-}\u(R) \). For each maximal ideal \( M \), there exist \( t_M \notin M \) and \( x_M \in R \) such that \( a x_M = t_M \). As in (1), the family \(\{ t_M : M \in \Max(R) \} \) generate the unit ideal, so
\[
1 = r_1 t_{M_1} + \cdots + r_n t_{M_n}
\]
for some \( r_i \in R \) and \( M_i \in \Max(R) \). Hence,
\[
1 = r_1 a x_{M_1} + \cdots + r_n a x_{M_n} = a(r_1 x_{M_1} + \cdots + r_n x_{M_n}),
\]
which shows that \( a \in \u(R) \).

(3) Let \( a \in \bigcap_{M \in \Max(R)} M\text{-}\nil(R) \). For each maximal ideal \( M \), there exists \( t_M \notin M \) and \( p_M \in \mathbb{N} \) such that \( t_M a^{p_{M}} = 0 \). Since  the family \(\{ t_M : M \in \Max(R) \} \) generate the unit ideal, there exist \( r_1, \dots, r_n \in R \) and \( M_1, \dots, M_n \in \Max(R) \) such that
\[
1 = r_1 t_{M_1} + \cdots + r_n t_{M_n}.
\]
Set \( N = p_{M_1} + \cdots + p_{M_n} \). Then,
\[
a^N = a^N (r_1 t_{M_1} + \cdots + r_n t_{M_n}) = \sum r_i a^N t_{M_i} = 0,
\]
since each \( t_{M_i} a^{p_{M_i}} = 0 \), and \( a^N = a^{q} a^{p_{M_i}} \) for some \( q \ge 0 \). Hence \( a \in \nil(R) \).

(4) The first inclusion is clear. Let \( a \in \bigcap_{M \in \Max(R)} M\text{-}\idem(R) \). Then for each \( M \), there exists \( t_m \notin M \) such that \( t_M a = a^2 \). As above, we write
\[
1 = r_1 t_{M_1} + \cdots + r_n t_{M_n}.
\]
Then,
\[
a = r_1 t_{M_1} a + \cdots + r_n t_{M_n} a = r_1 a^2 + \cdots + r_n a^2 = (r_1 + \cdots + r_n) a^2,
\]
so \( a \in \vnr(R) \). If \( a \) is regular, then
\[
1 = (r_1 + \cdots + r_n) a.
\]

(5) This follows by using a similar argument as in the previous statements.

\end{proof}

Recall from \cite{Yil1} that an ideal \( P \) of a ring \( R \) is said to be \( S \)-prime if \( P \cap S = \emptyset \) and there exists a fixed \( s \in S \) such that, whenever \( ab \in P \) for some \( a, b \in R \), we have either \( sa \in P \) or \( sb \in P \). 

An ideal \( M \) of \( R \) is called \( S \)-maximal if \( M \cap S = \emptyset \) and there exists a fixed \( s \in S \) such that, for any ideal \( I \) of \( R \) with \( M \subseteq I \), either \( sI \subseteq M \) or \( I \cap S \neq \emptyset \).

Recall also from \cite{Sev1} that a ring \( R \) is called an \( S \)-integral domain if there exists a fixed \( s \in S \) such that whenever \( ab = 0 \) for some \( a, b \in R \), then either \( sa = 0 \) or \( sb = 0 \). Note that every integral domain is an \( S \)-integral domain. Moreover, \( R \) is an \( S \)-integral domain if and only if the zero ideal \( (0) \) is an \( S \)-prime ideal.

Furthermore, recall from \cite[Definition 9]{Yil1} that \( R \) is called an \( S \)-field if \( (0) \) is an \( S \)-maximal ideal. By definition, every field is also an \( S \)-field, and every \( S \)-field is an \( S \)-integral domain.

We now present the following result, which extends the classical characterization of a field: a ring $R$ is a field if and only if every nonzero element of $R$ is invertible.

\begin{prop}
The following assertions are equivalent:
\begin{enumerate}
\item \( R \) is an \( S \)-field.
\item There exists \( s \in S \) such that every element of \( R \) is either \( S \)-invertible or \( S \)-zero with respect to \( s \).
\end{enumerate}
\end{prop}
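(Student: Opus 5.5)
The plan is to unwind both definitions and compare them directly, taking the ideal $I=Ra$ as the test ideal in one direction. I first fix the reading of (2): the fixed element $s$ is the one witnessing $S$-zeroness ($sa=0$), while $S$-invertibility means $ab\in S$ for some $b\in R$. This matches what $S$-maximality provides. I also use the standing convention $0\notin S$, so that $(0)\cap S=\emptyset$ holds automatically.

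\emph{(1)$\Rightarrow$(2).} Suppose $(0)$ is $S$-maximal, witnessed by $s\in S$. Let $a\in R$ and apply the definition of $S$-maximality to the ideal $I=Ra\supseteq(0)$. There are two cases.
\begin{enumerate}
\item If $sRa\subseteq(0)$, then $sa=0$, so $a$ is $S$-zero with respect to $s$.
\item If $Ra\cap S\neq\emptyset$, then $ab=t\in S$ for some $b\in R$, so $a$ is $S$-invertible.
\end{enumerate}
Since $s$ does not depend on $a$, this is exactly (2).

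\emph{(2)$\Rightarrow$(1).} Let $s$ be as in (2). The condition $(0)\cap S=\emptyset$ holds because $0\notin S$. Let $I$ be any ideal with $I\cap S=\emptyset$; I must show $sI\subseteq(0)$. Take $a\in I$. If $a$ were $S$-invertible, then $ab\in S$ for some $b$, and $ab\in I$, contradicting $I\cap S=\emptyset$. Hence by (2), $a$ is $S$-zero with respect to $s$, i.e.\ $sa=0$. As $a\in I$ was arbitrary, $sI=0\subseteq(0)$. Therefore $(0)$ is $S$-maximal with witness $s$.

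The main obstacle is the interpretation of ``with respect to $s$'' for the invertible alternative, not any computation. If one insisted on $ab=s$ exactly, direction (1)$\Rightarrow$(2) would only give $ab=t$ for some $t\in S$, and I do not expect $t$ can be replaced by the fixed $s$ in general. So I would state explicitly that the uniformity in (2) refers to the $S$-zero condition, with $S$-invertibility taken in the sense of the Definition. Under that reading both directions are immediate from the definitions and use no earlier results.
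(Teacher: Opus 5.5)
Your proof is correct and follows the paper's route. For (1)$\Rightarrow$(2) you test the $S$-maximality of $(0)$ against the principal ideal $Ra$, and for (2)$\Rightarrow$(1) you observe that an ideal disjoint from $S$ contains no $S$-invertible element, so $sI=0$. You also state two points the paper's argument uses only implicitly: the uniform $s$ governs only the $S$-zero alternative, and $0\notin S$ is needed to get $(0)\cap S=\emptyset$.
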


\begin{proof}
$1. \Rightarrow 2.$ Assume that \( (0) \) is \( S \)-maximal with respect to some \( s \in S \). Let \( a \in R \) be a non-\( S \)-zero element. Since \( (0) \subseteq Ra \), and \( (0) \) is \( S \)-maximal, we have either \( Ra \cap S \neq \emptyset \) or \( sRa = 0 \). But \( a \) is not \( S \)-zero, so \( sa \neq 0 \), and thus \( sRa \neq 0 \). Hence, \( Ra \cap S \neq \emptyset \), i.e., there exists \( r \in R \) and \( t \in S \) such that \( ra = t \). This shows that \( a \) is \( S \)-invertible.

$2. \Rightarrow 1.$ Let \( I \) be an ideal of \( R \) such that \( I \cap S = \emptyset \). Then no element of \( I \) is \( S \)-invertible. By assumption (2), each element of \( I \) must be \( S \)-zero with respect to some fixed \( s \in S \). That is, for all \( a \in I \), we have \( sa = 0 \), so \( sI = 0 \). Therefore, \( (0) \) is \( S \)-maximal with respect to \( s \), completing the proof.
\end{proof}

We define a subset \( A \subseteq R \) to be \( S \)-torsion if every element \( a \in A \) is \( S \)-zero.

\begin{prop}\label{prop-on-interse-of-some-special-subset} Let \( R \) be a commutative ring, and let $S$ be a multiplicative subset of $R$. 
\begin{enumerate}
\item If \( a, b \in R \) with \( ab \in \Su(R) \), then \( a \in \Su(R) \).

\item If \( e \) is \( S \)-idempotent with respect to \( s \in S \), then so is \( s - e \).
\item Let \( a \in R \). If \( sa = a^2b \) (resp., \( sa^n = a^{2n}b \)) for some \( b \in R \), \( s \in S \), and \( n \geq 1 \), then \( ab \in \Sidem(R) \) (resp., \( a^n b \in \Sidem(R) \)).
   
    \item The subsets \( \Su(R) \), \( \Sidem(R) \), \( S\text{-}\mathrm{vnr}(R) \), and \( S\text{-}\pi\text{-}\mathrm{reg}(R) \) are multiplicatively closed.
    
    \item The intersection \( \Svnr(R) \cap \Snil(R) \) is \( S \)-torsion. Consequently, so is the subset \( \Sidem(R) \cap \Snil(R) \).
     
    \item Let \( s, t \in S \). If the set \( \{s^n : n \in \mathbb{N} \} \) is finite, then the set \( s\text{-}\vnr(R) \cap t\text{-}\nil(R) \) is uniformly \( S \)-torsion, and so is \( s\text{-}\idem(R) \cap t\text{-}\nil(R) \).
\end{enumerate}
\end{prop}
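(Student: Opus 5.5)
Each item is a short direct computation from the definitions; I would take them in order.

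For (1), if $abx = s \in S$ for some $x\in R$, then $a(bx)=s$, so $a\in \Su(R)$.

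For (2), given $se=e^2$, I expand
\[ (s-e)^2 = s^2 - 2se + e^2 = s^2 - 2se + se = s^2 - se = s(s-e), \]
so $s-e$ is $S$-idempotent with respect to $s$.

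For (3), from $sa=a^2b$ I compute $(ab)^2 = a^2b\cdot b = sab$, so $ab$ is $S$-idempotent with respect to $s$. The $\pi$-regular version is the same computation with $a$ replaced by $a^n$, since $sa^n = a^{2n}b$ has the same shape.

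For (4), closure under multiplication is a direct product of witnesses. If $ax=s$ and $cy=t$, then $(ac)(xy)=st\in S$. If $sa=a^2$ and $tc=c^2$, then $(st)(ac)=(ac)^2$. If $sa=a^2x$ and $tc=c^2y$, then $st\,ac=(ac)^2xy$.

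The $\pi$-regular case is the only one needing care, because the exponents may differ. Suppose $sa^n=a^{2n}x$ and $tc^m=c^{2m}y$. I first normalize both to a common exponent $k=nm$. From $sa^n=a^{2n}x$, multiplying by $a^n$ and substituting repeatedly, I expect by induction an identity of the form
\[ s^{j}a^{jn}=a^{2jn}x^{j}. \]
Indeed, the inductive step is $s^{j+1}a^{(j+1)n} = s^j a^{jn}\cdot sa^n = a^{2jn}x^j\cdot a^{2n}x$. Taking $j=m$ for $a$ and $j=n$ for $c$, I multiply the two identities to get
\[ s^m t^n (ac)^{k} = (ac)^{2k}x^m y^n. \]
This exponent bookkeeping is the main (mild) obstacle.

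For (5), suppose $sa=a^2b$ and $ta^n=0$. I first iterate as in (4) to get $s^{j}a = a^{j+1}b^{j}$; this follows by induction, multiplying by $s$ and replacing $sa$ each time. Taking $j=n$ gives
\[ ts^n a = a^{n+1}b^n t = 0, \]
so $a$ is $S$-zero, witnessed by $ts^n\in S$. The consequence for $\Sidem(R)\cap\Snil(R)$ follows from $\Sidem(R)\subseteq\Svnr(R)$.

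For (6), I need a single element killing every $a \in s\text{-}\vnr(R)\cap t\text{-}\nil(R)$. The witness $ts^n$ from (5) depends on the nilpotency index $n$, so I need it independent of $n$. Since $\{s^n\}$ is finite, there are $p<q$ with $s^p=s^q$. Then every power $s^n$ lies in the finite set $\{s,\dots,s^{q}\}$; more usefully, for $n\ge p$ one has $s^n=s^{p+r}$ with $0\le r<q-p$, which divides into a multiple of $s^p$.

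I would take $u=ts^{N}$ with $N=q$, or better the product $ts\cdots s^q$, chosen so that each $ts^n$ divides $u$ in the sense $u=ts^n\cdot(\text{element of }S\cup\{1\})$. Concretely, for $n\le q$ we have $u = ts^n\cdot s^{N-n}$ when $N\ge n$, and for $n>q$, $s^n$ equals some $s^{n'}$ with $n'\le q$. So $ua=0$ for all such $a$, a uniform witness. The $s$-idem case follows since $s\text{-}\idem(R)\subseteq s\text{-}\vnr(R)$ (take $b=1$).
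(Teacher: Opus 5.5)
Your proof is correct and follows the paper's route: the same direct computations for (1)--(5), with your iteration in (5) giving the witness $ts^{n}$ instead of the paper's $ts^{n-1}$, which makes no difference. Your common-exponent argument for $\Spreg(R)$ in (4) and your uniform witness $ts^{q}$ in (6) supply details the paper only describes as ``similar''.
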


\begin{proof}

1. Let \( s \in S \) and \( c \in R \) such that \( (ab)c = t \), where \( t \in S \). Thus, \( a(bc) = t \in S \), which implies that \( a \) is $S$-invertible in \( R \), and hence \( a \in \Su(R) \).

2. If \( se = e^2 \), then:
\[
(s - e)^2 = s^2 - 2se + e^2 = s(s - e).
\]
Thus, \( s - e \) is also \( S \)-idempotent.

3. Let \( a \in R \) be an \( S \)-von Neumann regular element with respect to some \( s \in S \), i.e., there exists \( b \in R \) such that \( sa = a^2b \). Then
\[
s(ab) = (sa)b = a^2b^2 = (ab)^2,
\]
thus \( ab \in \Sidem(R) \). The "respectively" part follows similarly.

\medskip

4. Let \( a, b \in R \) be two elements that are von Neumann regular with respect to \( s \) and \( t \), respectively. Then there exist \( x, y \in R \) such that \( sa = a^2x \) and \( tb = b^2y \). Then
\[
st(ab) = (sa)(tb) = a^2x b^2y = (ab)^2(xy),
\]
which shows that \( ab \in \Svnr(R) \). 

A similar argument applies to the other subsets.
\medskip

5. Let \( a \in \Svnr(R) \cap \Snil(R) \). Then there exist \( s, t \in S \), \( x \in R \), and \( n \in \mathbb{N} \) such that \( sa = a^2x \) and \( ta^n = 0 \). Then
\[
 ts^{n-1}a = txs^{n-2}a^2 = tx^2s^{n-3}a^3 = \cdots = tx^{n-1}a^n = 0.
\]
Hence \( a \) is an \( S \)-zero, and the intersection is \( S \)-torsion. 

\medskip

6. This follows similarly to (3), using the finiteness of the set \( \{s^n : n \in \mathbb{N} \} \).
\end{proof}

 As an \( S \)-analogue of \cite[Theorem 2.2]{And1}, which provides a characterization of von Neumann regular elements, we state the following theorem.
 
\begin{thm}\label{thm-s-von-ele-char}
Let \( R \) be a commutative ring. The following statements are equivalent for \( a \in R \):
\begin{enumerate}
    \item \( a \in \Svnr(R) \).
    \item There exist \( s \in S \) and \( u \in \Su(R) \) such that \( sa = a^2u \).
    \item There exist \( s \in S \), \( u \in \Su(R) \), and \( e \in \Sidem(R) \) such that \( sa = ue \).
    \item There exist \(s \in S \) and \( b \in \Svnr(R) \setminus \{a\} \) such that \( sab = 0 \) and \( sa + b \in \Su(R) \).
    \item There exist \( s \in S \) and \( b \in R \) such that \( sab = 0 \) and \( sa + b \in U(R) \).
\end{enumerate}
\end{thm}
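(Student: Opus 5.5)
Under the reading of $U(R)$ in (5) as $\Su(R)$ (justified below), I will prove the cycle $(1)\Rightarrow(2)\Rightarrow(3)\Rightarrow(1)$ and then $(1)\Rightarrow(4)\Rightarrow(5)\Rightarrow(1)$. Every step is a direct computation. The main trick, used throughout, is to replace the witness $b$ in $sa=a^2b$ by a ``symmetric'' one. Given $sa=a^2b$, put $c=ab^2$ and $t=s^2$. A short computation then gives
\[
ta=a^2c,\qquad tc=c^2a .
\]
Set $e=ac$. Then $te=e^2$, so $e\in\Sidem(R)$. Moreover
\[
a(t-e)=ta-a^2c=0,\qquad c(t-e)=tc-ac^2=0 ,
\]
and by Proposition~\ref{prop-on-interse-of-some-special-subset}(2), $t-e$ is again $S$-idempotent with respect to $t$.

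For $(1)\Rightarrow(2)$ I take
\[
u=c+(t-e),\qquad v=ta+(t-e).
\]
Since $a^2(t-e)=0$, we get $a^2u=a^2c=ta$. Using $ca=e$, $c(t-e)=0$, $a(t-e)=0$ and $(t-e)^2=t(t-e)$, one finds $uv=te+t(t-e)=t^2\in S$. Hence $u\in\Su(R)$.

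For $(2)\Rightarrow(3)$, suppose $sa=a^2u$ and $uw=r\in S$. Then $e:=au$ satisfies $e^2=a^2u^2=s(au)=se$, so $e\in\Sidem(R)$. Also $ra=auw=we$, and $w\in\Su(R)$ by Proposition~\ref{prop-on-interse-of-some-special-subset}(1).

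For $(3)\Rightarrow(1)$, suppose $sa=ue$, $te=e^2$ and $uw=r\in S$. First, $re=wue=swa$. Second, $tsa=ute=ue^2=sa\,e$. Multiplying the second identity by $r$ and substituting the first gives
\[
(rts)\,a=sa\cdot swa=a^2(s^2w),
\]
so $a\in\Svnr(R)$.

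The delicate point is statement (5) as literally written with the group of units $U(R)$. In $R=\mathbb Z$ with $S=\mathbb Z\setminus\{0\}$, the element $a=2$ is $S$-von Neumann regular. However, $s\cdot 2\cdot b=0$ forces $b=0$, and then $2s$ is never a unit, so (5) fails for $a=2$. I therefore read $U(R)$ in (5) as $\Su(R)$, matching (4); I expect this to be the intended statement.

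With that reading, $(1)\Rightarrow(4)$ uses the same $t,e$ with $b:=t-e$. We have $tab=0$, and $ta+b=v\in\Su(R)$ with $uv=t^2$ as computed above. Since $b$ is $S$-idempotent, it lies in $\Svnr(R)$ by Remarks~\ref{rems-follows-defn}(5). If it happens that $b=a$, I would instead use $b'=t(t-e)$ with $s=t$. It satisfies the same identities, since $tab'=0$, and $ta+b'$ is still $S$-invertible by a similar product computation. It is also $S$-idempotent, since $(t(t-e))^2=t^3(t-e)$. The requirement $b\ne a$ is the only bookkeeping nuisance I foresee.

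The implication $(4)\Rightarrow(5)$ is immediate. For $(5)\Rightarrow(1)$, suppose $sab=0$ and $(sa+b)w=r\in S$. Multiplying by $sa$ gives $rsa=s^2a^2w+sabw=a^2(s^2w)$, so $a\in\Svnr(R)$.
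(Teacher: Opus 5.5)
Your proof is correct up to one loose end, and your reading of (5) is the right one. The paper's own proof of $(5)\Rightarrow(1)$ picks $v$ with $uv=t\in S$. Its claim that $(4)\Rightarrow(5)$ is ``clear'' only works if $U(R)$ there means $\Su(R)$. Your example ($a=2$ in $\mathbb{Z}$, $S=\mathbb{Z}\setminus\{0\}$) shows the statement is false with the actual unit group.

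\textbf{Comparison with the paper's route.} The paper runs a single cycle $(1)\Rightarrow(2)\Rightarrow(3)\Rightarrow(4)\Rightarrow(5)\Rightarrow(1)$. Its $(1)\Rightarrow(2)$ and $(2)\Rightarrow(3)$ are essentially yours: its $u=ex+s-e$ with $e=ax$ contains your $c=ab^2$ as $ex$, but uses the idempotent $ax$ relative to $s$ rather than $a^2b^2$ relative to $s^2$. It then builds (4) from an arbitrary factorization $sa=ue$ via $b=u(r-e)$, so $sa+b=ru$, and returns to (1) only through (5).

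You instead close $(3)\Rightarrow(1)$ directly, with the explicit witness $(rts)a=a^2(s^2w)$. You then get (4) from (1) with $b=t-e$, so that $ta+b$ is exactly the $v$ satisfying $uv=t^2$. This costs one extra implication, but (4) becomes a by-product of $(1)\Rightarrow(2)$, and your $b$ is even $S$-idempotent. The paper's $b$ is only shown to be $S$-von Neumann regular.

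\textbf{The condition $b\neq a$.} The paper ignores this condition entirely. Your fallback $b'=t(t-e)$ does satisfy the identities, e.g.\ $(ta+b')(tc+(t-e))=t^3$. However, you never check $b'\neq a$, which is its whole purpose. In the only situation where the fallback is needed, $b'=tb=ta=0$, so it fails when $a=0$.

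The clean fix is to show the bad case cannot happen. If $t-e=a$, then $a^2=a(t-e)=0$, hence $ta=a^2c=0$. It follows that $v=ta+(t-e)=a$, and
\[
t^3=t\,uv=u(ta)=0,
\]
which is impossible when $0\notin S$. If $0\in S$, every element is $S$-invertible, and (4) holds trivially with $s=0$ whenever $R\neq 0$.
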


\begin{proof}
\(1.  \Rightarrow  2.\) Let \( s \in S \) and \( x \in R \) such that \( sa = xa^2 \). Let \( e = ax \). By proposition \ \ref{prop-on-interse-of-some-special-subset}, we have \( se = e^2 \) and \( s(s-e) = (s-e)^2 \). Moreover, we have \( a(s - e) = 0 \). Let \( u = ex + s - e \) and \( v = a + s - e \). Then, we have \( uv = s^2 \), so \( u \in \Su(R) \), and
\[
a^2u = a^2(ex + s - e) = a^2ex + a^2(s - e) = a^2ex = sae = sa^2x = s^2a.
\]

\(2. \Rightarrow  3.\) 
Assume that \( sa = a^2u \) for some \( s \in S \) and \( u \in \Su(R) \). Let \( t \in S \) and \( v \in R \) such that \( uv = t \). Set \( e = au \), so we have \( se = e^2 \in \idem(R) \), and
\[
ve = v(au) = t a.
\]

 \(3. \Rightarrow  4.\)
Assume (3) holds, i.e., \( sa = ue \) for some \( u \in \Su(R) \) and \( e \in \Sidem(R) \). Let \( v \in R \) and \( t \in S \) such that \( uv = t \), and let \( r \in S \) such that \( re = e^2 \). Then, for \( b = u(r - e) \), we have
\[
b^2v = u^2(r - e)^2v = tru(r - e) = trb,
\]
since \( (r - e)^2 = r(r - e) \) by Proposition \ref{prop-on-interse-of-some-special-subset}(2). Hence, \( b \in \Svnr(R) \). Moreover,
\[
sab = (ue)(u(r - e)) = u^2e(r - e) = 0,
\]
and
\[
sa + b = ue + ur - ue = ru.
\]
Thus, \( sa + b \in \Su(R) \), as desired.

 \(4. \Rightarrow 5.\) 
This is clear.

\(5. \Rightarrow 1.\) Assume \( sa + b = u \in U(R) \) and \( sab = 0 \) for some \( b \in R \), \( s \in S \). Let \( v \in R \) and \( t \in S \) such that \( uv = t \). Then,
\[
sau = sa(sa + b) = s^2a^2 + sab = s^2a^2.
\]
Thus, 
\[
a^2(s^2v) = (s^2a^2)v = (sau)v = sta,
\]
so \( a \in \Svnr(R) \), as desired.
\end{proof} 
  
Recall from \cite[Lemma~4]{Rap1} that if \( a \) is a von Neumann regular element, then there exists a unique element \( x \in R \) such that \( a = a^{2}x \) and \( x = x^{2}a \). Theorem~\ref{thm-S-von-regualr-unique-decompostion} can be viewed as an \( S \)-analogue of this result.

   \begin{thm}\label{thm-S-von-regualr-unique-decompostion}
Let \( R \) be a commutative ring and let \( a \in R \) be an \( S \)-von Neumann regular element. Then, there exist \( s \in S \) and \( x \in R \) such that \( a^2x = sa \) and \( x^2a = sx \). Moreover, if \( y \in R \) is another such element, then \( s^4y = s^4x \). In particular, if \( s \) is a regular element, then \( x = y \), and hence \( x \) is unique.
\end{thm}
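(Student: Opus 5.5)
The plan is to imitate the classical construction of the ``inverse'' $x = ab^2$ of a von Neumann regular element. Since this construction doubles the element of $S$ involved, I will first fix a witness of $S$-von Neumann regularity and then replace it by its square, which still lies in $S$.

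\textbf{Existence.} Since $a \in \Svnr(R)$, choose $t \in S$ and $b \in R$ with $ta = a^2 b$, and set $x = ab^2$. Then
\[
a^2x = a^3b^2 = (a^2b)ab = ta\cdot ab = t\,a^2b = t^2a ,
\]
and
\[
x^2a = a^3b^4 = b^3a(a^2b) = t\,a^2b^3 = t\,b^2(a^2b) = t^2ab^2 = t^2x .
\]
So $s := t^2 \in S$ satisfies both $a^2x = sa$ and $x^2a = sx$, with the same $s$. Finding one $s$ that works for both equations is the only point needing care, and squaring $t$ handles it.

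\textbf{Uniqueness up to $S$.} Suppose $y$ also satisfies $a^2y = sa$ and $y^2a = sy$. Multiply $sx = x^2a$ by $s$ and substitute $sa = a^2y$:
\[
s^2x = x^2(sa) = x^2a^2y = x(xa^2)y = x(sa)y = s\,axy .
\]
The same computation with the roles of $x$ and $y$ exchanged gives
\[
s^2y = y^2(sa) = y^2a^2x = y(ya^2)x = s\,axy .
\]
Hence $s^2x = s^2y$, and multiplying by $s^2$ gives $s^4x = s^4y$ as stated (in fact $s^2$ already suffices).

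Finally, if $s$ is a regular element, then $s^4$ is regular too, so it can be cancelled and $x = y$. I expect no real obstacle beyond choosing $s = t^2$.
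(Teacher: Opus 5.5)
Your proof is correct and is essentially the paper's argument: the same $x = ab^2$, the same passage from the witness $t$ to $t^2$ so that one element of $S$ serves both equations, and the same uniqueness computation, which the paper writes as a single chain through the same middle term $s\,axy$. Your remark that $s^2x = s^2y$ already holds agrees with what the paper's chain actually proves. The paper's $s^4$ is expressed in terms of the original witness, so it is the square of the element that appears in the two equations.
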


\begin{proof}
Let \( a \in R \) be an \( S \)-von Neumann regular element. Then there exist \( b \in R \) and \( s \in S \) such that \( sa = ba^2 \). Set \( x = ab^2 \). We compute:
\[
xa^2 = ab^2a^2 = a(ba)^2 = (ba)^2a = s^2a, \quad \text{so } s^2a = xa^2,
\]
and
\[
ax^2 = a(ab^2)^2 = a^2b^4 = s^2b^2 = s^2x, \quad \text{so } s^2x = ax^2.
\]
Thus, such an \( x \) exists.

Now, suppose \( y \in R \) also satisfies \( s^2a = ya^2 \) and \( s^2y = ay^2 \). Then:
\[
s^4y = s^2a y^2 = x a^2 y^2 = x (a^2 y) y = s^2 x y a =  a x^2 y a = (a^2 y) x^2 = s^2 a x^2 = s^4 x.
\]
Hence, \( s^4 y = s^4 x \). In particular, if \( s \) is a regular element, then cancellation implies \( y = x \), and the uniqueness of \( x \) follows.
\end{proof}


It was shown in \cite[Theorem 2.1]{And1} (respectively, \cite[Theorem 4.1]{And1}) that \( \vnr(R) \) (respectively, \( \preg(R) \)) is always closed under multiplication. Moreover, the additive closure of \( \vnr(R) \) was investigated in \cite[Theorems 2.9 and 2.11]{And1}. The key results are: if \( \vnr(R) \) is closed under addition, then \( R \) is reduced \cite[Theorem 2.9]{And1}; and if \( 2 \in \u(R) \), then \( \vnr(R) \) is closed under addition if and only if the sum of any four units in \( R \) is a von Neumann regular element \cite[Theorem 2.11]{And1}.

These observations naturally lead to the following question:

\begin{pb}\label{problem1}
When is \( \Svnr(R) \) closed under addition? In other words, when is it a subring of \( R \)?
\end{pb}

We first recall that a ring \(R\) is called  \(S\)-reduced if it has no nonzero \(S\)-nilpotent elements; that is, \(\Snil(R) = 0\). An element \(a \in R\) is said to be \(S\)-nilpotent if there exist \(s \in S\) and \(n \in \mathbb{N}^*\) such that \(sa^n = 0\) \cite{Ers1}.

Moreover, a ring \( R \) is said to be uniformly \( S \)-reduced if \( \nil(R) \) is uniformly \( S \)-torsion \cite{Kim2}.

Notice that every reduced ring is both uniformly \( S \)-reduced and \( S \)-reduced, and that if \( R \) is \( S \)-reduced, then \( S \subseteq \reg(R) \). In our setting, we are interested in the following related notion.

\begin{defn}
A ring \( R \) is said to be weakly \( S \)-reduced if \( \nil(R) = 0 \) is \( S \)-torsion.
\end{defn}

Obviously, any reduced ring is weakly \( S \)-reduced; however, the converse does not hold in general. Indeed, it suffices to consider a uniformly \( S \)-reduced ring that is not reduced. In Example~\ref{exmp-weakly-s-reduced-not-u-s-reduced}, we provide an example of a weakly \( S \)-reduced ring that is not uniformly \( S \)-reduced. 

\begin{exmp}\label{exmp-weakly-s-reduced-not-u-s-reduced} 
Let \( R \) be a commutative ring and \( S \) a multiplicative subset of \( R \) such that:
\begin{enumerate}
    \item \( R_S \) is von Neumann regular.
    \item There exists an \( S \)-torsion \( R \)-module \( M \) that is not uniformly \( S \)-torsion.
\end{enumerate}
See \cite[Example~3.15]{Zha4} for such an example. Then the trivial extension \( R \ltimes M \) is a weakly \( S \ltimes \{0\} \)-reduced ring that is not uniformly \( S \ltimes \{0\} \)-reduced.
\end{exmp}

\begin{proof}
Let \( x \in R \) be such that \( x^n = 0 \) for some \( n \in \mathbb{N} \). Then 
\[
\frac{x^n}{1} = \left( \frac{x}{1} \right)^n = \frac{0}{1}
\]
in \( R_S \). Since \( R_S \) is reduced, it follows that \( \frac{x}{1} = \frac{0}{1} \), so there exists \( s \in S \) such that \( sx = 0 \). Thus, \( R \) is weakly \( S \)-reduced. 

Since \( M \) is \( S \)-torsion and \( \nil(R \ltimes M) = \nil(R) \ltimes M \), it follows that \( R \ltimes M \) is also weakly \( S \ltimes \{0\} \)-reduced. However, \( R \ltimes M \) is not uniformly \( S \ltimes \{0\} \)-reduced, because \( M \) is not uniformly \( S \)-torsion.
\end{proof}

\begin{prop}\label{prop-s-red-iff-s-torR-is-s-tor} The following assertions are equivalent: 
\begin{enumerate}
\item $R$ is weakly $S$-reduced.
\item $\Snil(R)$ is $S$-torsion.
\end{enumerate}
\end{prop}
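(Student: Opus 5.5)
The two implications are handled separately. The direction from 2 to 1 is immediate from $\nil(R) \subseteq \Snil(R)$: take $s = 1 \in S$ in the definition of $\Snil(R)$. Then $\nil(R)$ inherits $S$-torsionness from $\Snil(R)$, which means $R$ is weakly $S$-reduced.

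For the direction from 1 to 2, assume $R$ is weakly $S$-reduced, i.e., every nilpotent element is $S$-zero. Let $a \in \Snil(R)$, so $t a^n = 0$ for some $t \in S$ and $n \geq 1$. The plan is to build a genuinely nilpotent element out of $a$. The natural candidate is $ta$, since
\[
(ta)^n = t^{n-1}(t a^n) = 0 .
\]
Thus $ta \in \nil(R)$, and by hypothesis there is $s \in S$ with $s(ta) = 0$. This says $(st)a = 0$ with $st \in S$, because $S$ is multiplicatively closed. Hence $a$ is $S$-zero, and since $a$ was arbitrary, $\Snil(R)$ is $S$-torsion.

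The only step needing an idea is choosing $ta$ as the nilpotent witness. Multiplying by the single factor $t$ suffices because the extra power $t^{n-1}$ in $(ta)^n$ is harmless. No uniformity over the choice of $s$ is required, since both conditions are pointwise. Consequently I expect no real obstacle; the argument should be just these few lines.
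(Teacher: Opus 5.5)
Your proof is correct and follows essentially the same route as the paper: for $(1)\Rightarrow(2)$ the paper also observes that $sa^n=0$ makes $sa$ nilpotent and then kills it with a second element of $S$. For $(2)\Rightarrow(1)$ it likewise uses only the inclusion $\nil(R)\subseteq\Snil(R)$.
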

\begin{proof}
$(1)\Rightarrow (2)$ Let \( a \in \Snil(R) \). Then, there exist \( s \in S \) and \( n \in \mathbb{N} \) such that \( sa^n = 0 \). Therefore, \( (sa)^n = 0 \), which implies that \( sa \in \nil(R) \). Thus, there exists \( t \in S \) such that \( ts a = 0 \). Therefore, \( \Snil(R) \) is \( S \)-torsion. 

$(2)\Rightarrow (1)$  This follows from the fact that $\nil(R)\subseteq \Snil(R)$. 
\end{proof}

It follows from Proposition~\ref{prop-s-red-iff-s-torR-is-s-tor} that every \( S \)-reduced ring is weakly \( S \)-reduced. However, Example~\ref{exmp-weakly-s-reduced-not-u-s-reduced} demonstrates that the converse does not hold in general.

Using arguments similar to those in the proof of Proposition~\ref{prop-s-red-iff-s-torR-is-s-tor}, we can establish the following result, Proposition~\ref{prop-u-s-red-iff-s-torR-is-u-s-tor}.

\begin{prop}\label{prop-u-s-red-iff-s-torR-is-u-s-tor} The following assertions are equivalent: 
\begin{enumerate}
\item $R$ is uniformly $S$-reduced. 
\item There exists \( s \in S \) such that \( s\text{-}\nil(R) \) is uniformly \( S \)-torsion. 
\end{enumerate}
\end{prop}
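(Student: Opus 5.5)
We prove the two implications separately, following the pattern of the proof of Proposition~\ref{prop-s-red-iff-s-torR-is-s-tor}. Throughout, a subset \(A\subseteq R\) is uniformly \(S\)-torsion if there is a single \(t\in S\) with \(tA=0\). By \(s\text{-}\nil(R)\) we mean the set of \(a\in R\) with \(sa^n=0\) for some \(n\geq 1\); here \(s\) is fixed, while \(n\) may vary with \(a\).

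\((2)\Rightarrow(1)\). Suppose \(s\in S\) is such that \(s\text{-}\nil(R)\) is uniformly \(S\)-torsion, say \(t\,s\text{-}\nil(R)=0\) for some \(t\in S\). If \(a\in\nil(R)\) with \(a^n=0\), then \(sa^n=0\), so \(a\in s\text{-}\nil(R)\). Hence \(\nil(R)\subseteq s\text{-}\nil(R)\), and \(t\) annihilates \(\nil(R)\) uniformly. This means \(R\) is uniformly \(S\)-reduced.

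\((1)\Rightarrow(2)\). Assume there is \(t\in S\) with \(t\,\nil(R)=0\). Fix any \(s\in S\); taking \(s=1\) is allowed, and the argument below works for every \(s\in S\). Let \(a\in s\text{-}\nil(R)\), so \(sa^n=0\) for some \(n\geq1\).
\begin{enumerate}
\item Since \(n\ge 1\), we have \((sa)^n=s^{n-1}(sa^n)=0\).
\item Therefore \(sa\in\nil(R)\), and the hypothesis gives \(tsa=0\).
\end{enumerate}
Thus the single element \(ts\in S\) annihilates every element of \(s\text{-}\nil(R)\), independently of \(a\) and of \(n\). So \(s\text{-}\nil(R)\) is uniformly \(S\)-torsion.

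The only point needing care is uniformity. In the non-uniform Proposition~\ref{prop-s-red-iff-s-torR-is-s-tor}, the annihilating element could depend on \(a\). Here the annihilator \(ts\) must not depend on the nilpotency index \(n\), and this is why we fix \(s\) in advance. This is also why statement (2) is phrased with a fixed \(s\) rather than with all of \(\Snil(R)\): an element with \(s'a^n=0\) yields only \(ts'a=0\), and as \(s'\) ranges over \(S\) there may be no common annihilator.
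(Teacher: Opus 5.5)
Your proof is correct and takes the route the paper intends. The paper gives no details and only says the argument of Proposition~\ref{prop-s-red-iff-s-torR-is-s-tor} carries over: $\nil(R)\subseteq s\text{-}\nil(R)$ for $(2)\Rightarrow(1)$, and $sa^n=0\Rightarrow (sa)^n=0\Rightarrow tsa=0$ for $(1)\Rightarrow(2)$, with the single annihilator $ts$ independent of $a$ once $s$ is fixed, exactly as you observe.
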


\begin{thm} The following assertions hold true: 
\begin{enumerate}
\item If \( \Svnr(R) \) is a subring of \( R \), then \( R \) is weakly \( S \)-reduced.

\item Assume that there exists \( s \in S \) such that \( \{s^n \mid n \in \mathbb{N} \} \) is finite and \( s\text{-}\vnr(R) \) is a subring of \( R \), then \( R \) is uniformly \( S \)-reduced.
\end{enumerate}
\end{thm}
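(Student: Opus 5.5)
Both parts rest on one observation: if $n$ is nilpotent and $\Svnr(R)$ (resp.\ $s\text{-}\vnr(R)$) is closed under addition, then $1+n$ and $-1$ can be added to land on $n$, forcing $n$ into the regular set. The intersection results of Proposition~\ref{prop-on-interse-of-some-special-subset}(5) and (6) then make $n$ torsion.

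For (1), let $n \in \nil(R)$. The element $1+n$ is a unit, hence invertible, and so lies in $\Su(R)\subseteq \Svnr(R)$. The element $-1$ is also a unit, so it lies in $\Svnr(R)$. Since $\Svnr(R)$ is assumed to be a subring, it is closed under addition. Therefore $n=(1+n)+(-1)\in \Svnr(R)$. As $n$ is also nilpotent, $n\in \Snil(R)$. By Proposition~\ref{prop-on-interse-of-some-special-subset}(5), $\Svnr(R)\cap\Snil(R)$ is $S$-torsion, so $n$ is $S$-zero. This shows $\nil(R)$ is $S$-torsion, i.e.\ $R$ is weakly $S$-reduced. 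If one prefers to avoid that citation, one can run the telescoping computation directly: from $ta=a^2x$ and $a^m=0$ one gets $t^{m-1}a=x^{m-1}a^m=0$.

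For (2), the same trick works with the fixed $s$. I take $u\in\u(R)$ and check that $u\in s\text{-}\vnr(R)$ by writing $sa = a^2 b$ with $b=su^{-1}$; this gives $a^2 b = su$ for $a=u$. Hence $1+n$ and $-1$ lie in $s\text{-}\vnr(R)$. Closure under addition gives $n\in s\text{-}\vnr(R)$ for every nilpotent $n$. Now $n$ satisfies $sn=n^2x$ for some $x$, and $n^m=0$. Iterating yields $s^{m-1}n=x^{m-1}n^m=0$.

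The main obstacle is that the exponent $m-1$ depends on $n$, while uniform torsion needs a single element of $S$ killing every nilpotent. This is exactly where finiteness of $\{s^k\}$ enters, as in Proposition~\ref{prop-on-interse-of-some-special-subset}(6). Since the powers of $s$ form a finite set, there are $i<j$ with $s^i=s^j$. Then for all $k\ge i$, $s^k$ lies in the finite set $\{s^i,\dots,s^{j-1}\}$, which is closed under multiplication by $s$. This finite cyclic semigroup contains an idempotent power $f=s^N$ with $N\ge 1$ and $f^2=f$. For each $k\ge 1$ we then have $f s^{k}=s^{N+k}$, and some power of $s$ of the form $s^{N'}$ with $N'\ge k$ equals $f$. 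Concretely, every $s^{k}$ with $k$ large is of the form $f\cdot s^{r}$, so $s^{k}n=0$ implies $fn\cdot s^{r}$-type relations; more simply, choose $N$ with $s^N$ idempotent and $N\ge i$. For any nilpotent $n$ with $s^{m-1}n=0$, pick $\ell$ with $\ell N\ge m-1$. Then $s^{N}n=(s^N)^{\ell}n=s^{\ell N-(m-1)}s^{m-1}n=0$. Hence $s^N$ annihilates all of $\nil(R)$, which is uniform $S$-torsion, and $R$ is uniformly $S$-reduced; Proposition~\ref{prop-u-s-red-iff-s-torR-is-u-s-tor} can be quoted to phrase the conclusion if desired. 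The only delicate bookkeeping is this idempotent-power argument, and everything else is routine.
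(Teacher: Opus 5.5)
Your proof is correct and follows the paper's route. You write a nilpotent $n$ as $(1+n)+(-1)$ to place it in $\Svnr(R)$ (resp.\ $s\text{-}\vnr(R)$), and then apply the torsion arguments of Proposition~\ref{prop-on-interse-of-some-special-subset}(5)--(6). Your version is in fact more complete than the paper's on two points the paper glosses over: you check that units lie in $s\text{-}\vnr(R)$ via $b=su^{-1}$, and you make the uniform bound explicit through an idempotent power $s^N$, using $s^{N}n=s^{\ell N}n=0$ once $\ell N\ge m-1$. You should, however, delete the garbled intermediate sentence about ``$fn\cdot s^{r}$-type relations.''
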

\begin{proof}
1. Let \( a \in \nil(R) \), and let $n \in \N$ such that $a^n=0$. Hence, \( 1 + a \in U(R) \subseteq \vnr(R) \subseteq \Svnr(R) \). Since \( \Svnr(R) \) is closed under addition, we have:
\[
a = -1 + (1 + a) \in \Svnr(R).
\]
Hence, \( a \in \nil(R) \cap \Svnr(R) \subseteq \Snil(R) \cap \Svnr(R) \). By Proposition \ref{prop-on-interse-of-some-special-subset}, there exists \( t \in S \) such that \( ta = 0 \) in \( R \). Therefore, \( R \) is weakly \( S \)-reduced.

2. Let \( a \in \nil(R) \). Then \( 1 + a \in U(R) \subseteq \vnr(R) \subseteq \Svnr(R) \). Since \( \Svnr(R) \) is closed under addition, we again have:
\[
a = -1 + (1 + a) \in \Svnr(R).
\]
Thus, \( a \in \nil(R) \cap s\text{-}\vnr(R) = 1\text{-}\nil(R) \cap s\text{-}\vnr(R) \). Consequently, \( \nil(R) = 1\text{-}\nil(R) \cap s\text{-}\vnr(R) \), which is uniformly \( S \)-torsion by Proposition \ref{prop-on-interse-of-some-special-subset}. 
\end{proof}

\begin{prop}\label{prop-if-2-u-s-inv-then-S-von-is-sum-of-2-S-inve}
Let \( R \) be a commutative ring with \( 2 = 1 + 1 \in \Su(R) \), and let \( a \in R \) be an \( S \)-von Neumann regular element. Then there exists \( s \in S \) such that \( sa \) is the sum of two \( S \)-invertible elements of \( R \).
\end{prop}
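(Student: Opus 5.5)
We follow the classical argument for \cite[Theorem 2.11]{And1}: write a regular element as a unit times an idempotent, $e = \tfrac{1}{2}\bigl((1+e)+(1-e)\bigr)$ up to signs, where $1 \pm e$ are units because $(1-2e)^2 = 1$. In the $S$-setting we use the $S$-idempotent $e$ and its complement $r-e$ from Proposition~\ref{prop-on-interse-of-some-special-subset}(2).

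\textbf{Step 1 (reduction).} By Theorem~\ref{thm-s-von-ele-char} ($1 \Rightarrow 3$), there are $s \in S$, $u \in \Su(R)$ and $e \in \Sidem(R)$ with $sa = ue$. Fix $r \in S$ with $re = e^2$. Since $\Su(R)$ is multiplicatively closed (Proposition~\ref{prop-on-interse-of-some-special-subset}(4)), it suffices to show that $t e$ is a sum of two $S$-invertible elements for some $t \in S$. Indeed, if $te = w_1 + w_2$, then
\[
tsa = ute = uw_1 + uw_2,
\]
and $uw_1, uw_2 \in \Su(R)$.

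\textbf{Step 2 (decomposing $e$).} Set $f = r - e$. Then $ef = 0$ and $e + f = r$. Consider $e + f = r$ and $e - f = 2e - r$. The first lies in $S \subseteq \Su(R)$. For the second,
\[
(2e - r)^2 = 4e^2 - 4re + r^2 = 4re - 4re + r^2 = r^2 \in S,
\]
so $2e - r \in \Su(R)$. Adding the two gives $r + (2e - r) = 2e$.

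\textbf{Step 3 (removing the factor $2$).} Since $2 \in \Su(R)$, pick $c \in R$ and $q \in S$ with $2c = q$. Multiplying by $c$,
\[
qe = c\,r + c\,(2e - r).
\]
Here the obstacle appears: $c$ itself need not be $S$-invertible, so $cr$ and $c(2e-r)$ might not lie in $\Su(R)$. We fix this by multiplying by $2$ once more:
\[
2qe = (2c)r + (2c)(2e - r) = qr + q(2e - r),
\]
which only recovers $2qe$ rather than a multiple of $e$ by an element of $S$. A better route is to note that $c$ is $S$-invertible: $c \cdot 2 = q \in S$ gives $c \in \Su(R)$ directly. Hence $cr$ and $c(2e - r)$ are products of elements of $\Su(R)$, and so lie in $\Su(R)$ by multiplicative closure. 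Thus $qe$ is a sum of two $S$-invertible elements.

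\textbf{Step 4 (conclusion).} Combining with Step 1, $(qs)a = ucr + uc(2e - r)$ is a sum of two $S$-invertible elements, with $qs \in S$. This proves the statement.

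The main subtlety is Step 3, ensuring that the coefficients used to divide by $2$ stay inside $\Su(R)$. The observation that any $c$ with $2c \in S$ is automatically $S$-invertible, together with multiplicative closure, resolves it.
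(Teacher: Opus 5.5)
Your proof is correct and is essentially the paper's argument. Both write $sa = ue$, use $(2e-r)^2 = r^2$ to get $2e-r \in \Su(R)$, and multiply by $c$ with $2c = q \in S$ to obtain $qsa = uc\,r + uc\,(2e-r)$; you make explicit that $c \in \Su(R)$, which the paper uses tacitly, though the abandoned ``multiply by $2$ once more'' detour in Step~3 should simply be deleted.
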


\begin{proof}
Let \( x \in R \) and \( k \in S \) be such that \( 2x = k \). Let \( a \in \Svnr(R) \). Then, by Theorem~\ref{thm-s-von-ele-char}, we have \( sa = ue \) for some \( u \in \Su(R) \) and \( e \in \Sidem(R) \). Let  \( t \in S \) be such that  \( te = e^2 \). Note that
\[
(2e - t)^2 = 4e^2 - 4te + t^2 = t^2,
\]
so \( w = 2e - t \in \Su(R) \), since \( w^2 = t^2 \in S \). Hence, we have:
\[
ke = xw + xt,
\]
and therefore
\[
ksa = kue = u(ke) = u(xw + xt) = xuw + xut.
\]
 Thus, \( ksa \) is the sum of two \( S \)-invertible elements of \( R \), and the result follows.
\end{proof}

 As a partial answer to Problem~\ref{problem1}, we establish the following result, Theorem~\ref{thm-Svnr(R)-subring}, which can be regarded as an extension of \cite[Theorem~2.11]{And1}.

\begin{thm}\label{thm-Svnr(R)-subring}
Let \( R \) be a commutative ring with \( 2 \in \Su(R) \). Then the following statements are equivalent:
\begin{enumerate}
    \item \( \Svnr(R) \) is a subring of \( R \).
    \item The sum of any four \( S \)-invertible elements is an \( S \)-von Neumann regular element of \( R \).
    \item \( u(s + k) + v(t + m) \in \Svnr(R) \) whenever \( s, t \in S \) and \( u, v, k, m \in \Su(R) \) with \( k^2, m^2 \in S \).

\end{enumerate}
\end{thm}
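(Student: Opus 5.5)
I will prove the cycle $1\Rightarrow 2\Rightarrow 3\Rightarrow 1$, modelled on the proof of \cite[Theorem~2.11]{And1}.

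\textbf{$1\Rightarrow 2$ and $2\Rightarrow 3$.} The first implication is immediate. By Remarks~\ref{rems-follows-defn}(4), $\Su(R)\subseteq\Svnr(R)$, so closure of $\Svnr(R)$ under addition makes any sum of four $S$-invertible elements $S$-von Neumann regular. For the second implication, note that $S\subseteq\Su(R)$ and that $k^2\in S$ makes $k$ $S$-invertible (take $b=k$). Since $\Su(R)$ is multiplicatively closed (Proposition~\ref{prop-on-interse-of-some-special-subset}(4)), the products $us,uk,vt,vm$ all lie in $\Su(R)$. Hence $u(s+k)+v(t+m)=us+uk+vt+vm$ is a sum of four $S$-invertible elements, and (2) gives (3).

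\textbf{$3\Rightarrow 1$.} This is the substantive direction. Let $a,b\in\Svnr(R)$. Fix $x\in R$ and $k_0\in S$ with $2x=k_0$. By Proposition~\ref{prop-if-2-u-s-inv-then-S-von-is-sum-of-2-S-inve} and its proof, there are $s_1,s_2\in S$ such that
\[
k_0s_1a=xu(w+t_1),\qquad k_0s_2b=xu'(w'+t_2).
\]
Here $u,u'\in\Su(R)$, $t_1,t_2\in S$, and $w=2e-t_1$, $w'=2e'-t_2$ satisfy $w^2=t_1^2\in S$ and $w'^2=t_2^2\in S$. Choose the common multiplier $c=k_0s_1s_2\in S$. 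Then
\[
ca=s_2xu\,(t_1+w),\qquad cb=s_1xu'\,(t_2+w'),
\]
and so $c(a+b)=U(t_1+w)+V(t_2+w')$ with $U=s_2xu$ and $V=s_1xu'$. We need $U,V\in\Su(R)$; this holds because $x\in\Su(R)$ (since $2x=k_0$) and $\Su(R)$ is multiplicatively closed. Hypothesis (3), applied with $s=t_1$, $k=w$, $t=t_2$, $m=w'$, then gives $c(a+b)\in\Svnr(R)$.

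\textbf{Descending from $c(a+b)$ to $a+b$.} This is the step I expect to be the main obstacle. From $r\,c(a+b)=c^2(a+b)^2y$ we get
\[
(rc)(a+b)=(a+b)^2(c^2y),
\]
so $a+b\in\Svnr(R)$ directly, with $rc\in S$. Thus no cancellation is needed. The point to watch is that every factor pulled out of $c(a+b)$ must be kept as a multiplier from $S$ on the left, or as a factor of $y$ on the right.

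\textbf{Remaining subring axioms.} Closure under multiplication is Proposition~\ref{prop-on-interse-of-some-special-subset}(4). We have $1\in\Svnr(R)$. Closure under negation holds because $s(-a)=(-a)^2(-b)$ whenever $sa=a^2b$. Combined with additive closure, this shows $\Svnr(R)$ is a subring.

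Finally, I should double-check that in (3) the elements $s$ and $k$ are intended as independent parameters. If $k$ is instead required to be related to $s$ (as $w=2e-t$ with $w^2=t^2$), the choice above still fits.
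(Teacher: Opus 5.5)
Your proposal is correct and follows the paper's proof essentially step for step. The paper also argues $1\Rightarrow 2$ via $\Su(R)\subseteq\Svnr(R)$, $2\Rightarrow 3$ via multiplicative closure of $\Su(R)$, and $3\Rightarrow 1$ via the decomposition from the proof of Proposition~\ref{prop-if-2-u-s-inv-then-S-von-is-sum-of-2-S-inve} with the common multiplier $k_0s_1s_2$. Your explicit descent $(rc)(a+b)=(a+b)^2(c^2y)$ and your check that $U,V\in\Su(R)$ (using $x\in\Su(R)$) spell out details the paper leaves implicit.
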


\begin{proof}
\(1.\Rightarrow 2.\) This follows directly from the inclusion \( \Su(R) \subseteq \Svnr(R) \), as noted in Remarks~\ref{rems-follows-defn}(5).

\(2.\Rightarrow 3.\) This follows trivially by Proposition~\ref{prop-on-interse-of-some-special-subset}(4), since the given element is a sum of four \( S \)-invertible elements.

\((3) \Rightarrow (1)\) By Proposition~\ref{prop-on-interse-of-some-special-subset}(4), it suffices to show that \( \Svnr(R) \) is closed under addition. Let \( a, b \in \Svnr(R) \). We aim to prove that \( a + b \in \Svnr(R) \). Let \( x \in R \) and \( k \in S \) such that \( 2x = k \).

By the proof of Proposition~\ref{prop-if-2-u-s-inv-then-S-von-is-sum-of-2-S-inve}, there exist \( s_a, s_b, t_a, t_b \in S \), and \( u_a, u_b, w_a, w_b \in \Su(R) \) such that $ w_a^2= t_a^2,  w_b^2=t_b^2,$ and   
\[
ks_a a = x u_a w_a + x u_a t_a, \quad ks_b b = x u_b w_b + x u_b t_b.
\]

Therefore,
\[
ks_a s_b (a + b) = s_b (ks_a a) + s_a (ks_b b) = x s_b u_a (w_a + t_a) + x s_a u_b (w_b + t_b).
\]
By (3), \(ks_a s_b (a + b) \in  \Svnr(R) \). Since \( ks_a s_b \in S \), we conclude that \( a + b \in \Svnr(R) \). 
\end{proof}

The rest of this section is devoted to examining the transfer of the different notions defined above along various ring constructions.

Let \( f : R \to R' \) be a ring homomorphism, and let \( S \) be a multiplicative subset of \( R \). It is straightforward to observe that \( f(S) \) forms a multiplicative subset of \( R' \) whenever \( 0 \notin f(S) \).

\begin{prop}\label{prop-homorphism-change-result} Let $f : R \to R'$ be a homomorphism of commutative rings and let $S$ be a multiplicative subset of $R$. Then, 
\begin{enumerate}
\item $f(\Su(R)) \subseteq f(S)\text{-}\mathrm{u}(R')$; 
\item $f(\Sidem(R)) \subseteq f(S)\text{-}\mathrm{idem}(R')$;
\item $f(\Svnr(R)) \subseteq f(S)\text{-}\mathrm{vnr}(R')$; 
\item $f(\Spreg(R)) \subseteq f(S)\text{-}\pi\text{-}\mathrm{reg}(R')$.
\end{enumerate} Moreover, if $f$ is surjective, then all the above inclusions are equality.
\end{prop}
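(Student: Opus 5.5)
The forward inclusions come straight from the fact that a ring homomorphism preserves the defining polynomial identities. I would first record that $f(S)$ is multiplicatively closed: $f(s)f(t)=f(st)$ and $f(1)=1$. Throughout I would tacitly assume $0\notin f(S)$, as in the preceding remark; otherwise the $f(S)$-notions degenerate. Then I would treat the four items in turn.

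\begin{enumerate}
\item If $ab=s$ with $s\in S$, then $f(a)f(b)=f(s)\in f(S)$, so $f(a)\in f(S)\text{-}\mathrm{u}(R')$.
\item If $sa=a^2$, then $f(s)f(a)=f(a)^2$.
\item If $sa=a^2b$, then $f(s)f(a)=f(a)^2f(b)$.
\item If $sa^n=a^{2n}b$, then $f(s)f(a)^n=f(a)^{2n}f(b)$.
\end{enumerate}

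In each case the witnesses $f(b)$ and $f(s)$ (and the same $n$) exhibit $f(a)$ as an element of the corresponding $f(S)$-set. This part is routine.

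For the reverse inclusions when $f$ is surjective, take $a'\in R'$ satisfying, say, $a'b'=f(s)$. Choose $a,b\in R$ with $f(a)=a'$ and $f(b)=b'$. Then $ab-s\in\ker f$, and similarly $sa-a^2\in\ker f$, $sa-a^2b\in\ker f$, or $sa^n-a^{2n}b\in\ker f$ in the other cases. The plan is then to try to choose the lift $a$ inside the coset $a+\ker f$ so that the error term vanishes. This step is the main obstacle, and I expect it to fail without further hypotheses.

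In fact, taking $S=\{1\}$, the projection $\mathbb{Z}\to\mathbb{Z}/5\mathbb{Z}$ is surjective. The residue class of $2$ is a unit of $\mathbb{Z}/5\mathbb{Z}$. But no element of its coset $2+5\mathbb{Z}$ is a unit of $\mathbb{Z}$, since the only units $\pm1$ map to the classes of $1$ and $4$. Hence equality in (1) fails as literally stated, and the same example breaks the idempotent case: the class of $3$ in $\mathbb{Z}/6\mathbb{Z}$ is idempotent, yet no integer congruent to $3$ modulo $6$ is idempotent.

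So for the ``moreover'' part I would prove the correct surjective statement instead. For $a'\in R'$, the following are equivalent:
\begin{itemize}
\item $a'\in f(S)\text{-}\mathrm{u}(R')$ (resp.\ $f(S)\text{-}\mathrm{idem}(R')$, $f(S)\text{-}\mathrm{vnr}(R')$, $f(S)\text{-}\pi\text{-}\mathrm{reg}(R')$);
\item some (equivalently every) preimage $a$ of $a'$ satisfies the defining identity modulo $\ker f$ for some $s\in S$ and $b\in R$.
\end{itemize}
In other words, $f(S)\text{-}\mathrm{u}(R')=f\bigl(\{a\in R : ab-s\in\ker f \text{ for some } b\in R,\ s\in S\}\bigr)$, and similarly for the other three sets. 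This follows from the lifting computation above, using surjectivity to lift both $a'$ and $b'$.

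I would add a remark that genuine equality requires an extra hypothesis such as $\ker f$ being contained in the Jacobson radical. I would not attempt to push equality further in general, since the example above shows it is false.
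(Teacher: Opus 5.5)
Your verification of the four forward inclusions is exactly the routine check behind the paper's one-word proof (``Straightforward''). Your objection to the ``moreover'' clause is also correct: the paper's claim of equality under mere surjectivity is false, so you are not missing an argument. With $S=\{1\}$ every $S$-notion collapses to its classical counterpart, and your maps $\mathbb{Z}\to\mathbb{Z}/5\mathbb{Z}$ and $\mathbb{Z}\to\mathbb{Z}/6\mathbb{Z}$ are valid counterexamples for (1) and (2). The first map also disposes of (3) and (4): $\vnr(\mathbb{Z})=\preg(\mathbb{Z})=\{0,\pm1\}$ maps onto $\{\bar0,\bar1,\bar4\}$, whereas every element of the field $\mathbb{Z}/5\mathbb{Z}$ is von Neumann regular. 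Your replacement statement, with images of the sets defined by the identities modulo $\ker f$, is correct, though it is essentially a restatement of the definitions.

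The one thing to fix is your closing remark suggesting $\ker f\subseteq J(R)$ as a hypothesis for genuine equality. It does work for (1) when $S=\{1\}$, since $ab\in 1+J(R)$ forces $a$ to be a unit. It fails for (2)--(4) even then. Take $R=\mathbb{Z}_T$ with $T=\mathbb{Z}\setminus(2\mathbb{Z}\cup 3\mathbb{Z})$; then $J(R)=6R$ and $R/6R\cong\mathbb{Z}/6\mathbb{Z}$. Every lift $3+6r$ of the idempotent $\bar3$ is a nonzero nonunit of the domain $R$, hence neither idempotent, nor von Neumann regular, nor $\pi$-regular.

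It also fails for (1) once $S$ is nontrivial. For a field $k$, let $R=k[x,y,z]_{(x,y,z)}$, $S=\{z^n\}$ and $\ker f=(z-xy)$. The class of $x$ is $f(S)$-invertible because $x\cdot y=f(z)$, but no $x+(z-xy)r$ divides a power of the prime $z$.

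A hypothesis adapted to the $S$-setting is that $\ker f$ be $S$-torsion. If $t(ab-s)=0$ with $t\in S$, then $a(tb)=ts\in S$. Similarly, $t(sa^n-a^{2n}b)=0$ gives $(ts)a^n=a^{2n}(tb)$. So equality holds in (1), (3) and (4). For (2) this only yields $tsa=ta^2$, and equality can genuinely fail. Take $R=\mathbb{F}_2\times\mathbb{Z}$, $S=\{(1,1),(1,0)\}$, and $f$ the quotient by $0\times 6\mathbb{Z}$, which is killed by $(1,0)$. Then the idempotent $(0,\bar3)$ has no $S$-idempotent preimage.
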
 
\begin{proof}
Straightforward.
\end{proof}

\noindent
Next, we state a uniformly \( S \)-version of Proposition~\ref{prop-homorphism-change-result}.

\begin{prop}\label{prop-homorphism-change-uni-result} Let $f : R \to R'$ be a homomorphism of commutative rings and let $S$ be a multiplicative subset of $R$. Let $s\in S$.  Then, 
\begin{enumerate}
\item $f(s\text{-}\u(R)) \subseteq f(s)\text{-}\mathrm{u}(R')$; 
\item $f(s\text{-}\idem(R)) \subseteq f(s)\text{-}\mathrm{idem}(R')$;
\item $f(s\text{-}\vnr(R)) \subseteq f(s)\text{-}\mathrm{vnr}(R')$; 
\item $f(s\text{-}\preg(R)) \subseteq f(s)\text{-}\pi\text{-}\mathrm{reg}(R')$.
\end{enumerate} Moreover, if $f$ is surjective, then all the above inclusions are equality.
\end{prop}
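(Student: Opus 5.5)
The plan is to handle the four inclusions by applying $f$ directly to the defining equations, and then to attack the equality under surjectivity by lifting witnesses from $R'$ to $R$.

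\emph{The inclusions.} Let $a\in s\text{-}\u(R)$, so $ab=s$ for some $b\in R$. Applying $f$ gives $f(a)f(b)=f(s)$, hence $f(a)\in f(s)\text{-}\u(R')$. The other three cases are identical in form:
\begin{itemize}
\item $sa=a^2$ gives $f(s)f(a)=f(a)^2$;
\item $sa=a^2b$ gives $f(s)f(a)=f(a)^2f(b)$;
\item $sa^n=a^{2n}b$ gives $f(s)f(a)^n=f(a)^{2n}f(b)$.
\end{itemize}
These use only that $f$ is a ring homomorphism. Here $f(s)$ is regarded as an element of the multiplicative set $f(S)$, assuming $0\notin f(S)$ as in the paragraph preceding Proposition~\ref{prop-homorphism-change-result}. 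This part is routine and mirrors Proposition~\ref{prop-homorphism-change-result}, with the single element $s$ fixed throughout.

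\emph{The reverse inclusions for surjective $f$.} Take $y\in f(s)\text{-}\vnr(R')$ (the other cases are analogous), so $f(s)y=y^2b'$. By surjectivity, choose $a,b\in R$ with $f(a)=y$ and $f(b)=b'$. Then $sa-a^2b=k\in\ker f$. The goal is to adjust the lift $a\mapsto a+k'$ with $k'\in\ker f$, or the witness $b$, so that the defect $k$ vanishes. For $s$-idempotents the defect is $sa-a^2$, and for $s$-invertibility it is $ab-s$. I would first try to absorb the defect into the witness $b$, since the element $a$ itself must be preserved up to $\ker f$.

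\emph{The main obstacle} is exactly this step, and I expect it to fail as stated. Take $R=\mathbb Z$, $S=\{1\}$, $s=1$, and $f:\mathbb Z\to\mathbb Z/5\mathbb Z$. Then $f(1\text{-}\u(\mathbb Z))=\{\pm1\}$, whereas $1\text{-}\u(\mathbb Z/5\mathbb Z)$ consists of all four units. So no choice of lifts can remove the defect in general.

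For the idempotent case I would check the classical obstruction: idempotents need not lift modulo an arbitrary ideal. For the von Neumann regular and $\pi$-regular cases I would look for a similar counterexample, but I do not yet have one.

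The equality therefore holds only under extra hypotheses, and the honest write-up would prove the inclusions together with the equality in those settings. The natural hypotheses are:
\begin{itemize}
\item $f$ is an isomorphism;
\item $\ker f$ is $s$-torsion in a suitable sense allowing the defect to be killed after multiplying by a power of $s$, which would require replacing $s$ by $s^m$.
\end{itemize}
The counterexample above should be recorded as showing that the "Moreover'' clause needs such a hypothesis.
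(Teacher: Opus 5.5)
Your proof of the four inclusions is the argument the paper intends: apply $f$ to the defining equation. The paper gives no proof of this proposition, and calls the non-uniform Proposition~\ref{prop-homorphism-change-result} ``straightforward''. On the ``Moreover'' clause you are right and the paper is wrong: equality under surjectivity fails, and the paper supplies no argument for it. Your example $\mathbb{Z}\to\mathbb{Z}/5\mathbb{Z}$ with $S=\{1\}$, $s=1$ is valid for (1). Because $S=\{1\}$ makes $s\text{-}\mathrm{u}$ and $S\text{-}\mathrm{u}$ coincide, the same example also refutes the ``Moreover'' clause of Proposition~\ref{prop-homorphism-change-result}.

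The cases you left open are no harder. The map $f:\mathbb{Z}\to\mathbb{Z}/10\mathbb{Z}$ with $S=\{1\}$, $s=1$ refutes all four equalities at once. Indeed $1\text{-}\mathrm{u}(\mathbb{Z})=\{\pm1\}$, $1\text{-}\idem(\mathbb{Z})=\{0,1\}$, and $1\text{-}\vnr(\mathbb{Z})=1\text{-}\preg(\mathbb{Z})=\{0,\pm1\}$, since $a^n=a^{2n}b$ with $a\neq0$ forces $a^nb=1$. In $\mathbb{Z}/10\mathbb{Z}$, however, $\bar{3}$ is a unit, $\bar{5}$ is idempotent, and $\bar{2}=\bar{2}^2\cdot\bar{3}$ is von Neumann regular, and none of them lies in the corresponding image.

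Your suggested repair via an $s$-torsion kernel does not restore equality for a fixed $s$. Suppose $s^m\ker f=0$. Lifting $f(s)y=y^2b'$ to $sa-a^2b=k\in\ker f$ only gives $s^{m+1}a=a^2(s^mb)$. So you get the sandwich $f(s\text{-}\vnr(R))\subseteq f(s)\text{-}\vnr(R')\subseteq f(s^{m+1}\text{-}\vnr(R))$, and similarly for $\mathrm{u}$ and for $\pi$-regularity. The same computation does give genuine equality in the non-uniform Proposition~\ref{prop-homorphism-change-result} when $\ker f$ is $S$-torsion, because the killing element $t$ is absorbed into $ts\in S$. For $S$-idempotents there is no witness to absorb the defect into, so that case would need a separate argument or hypothesis.
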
 

Let $R = \prod_{i\in I} R_i$ be the direct product of a family of commutative rings $(R_i)_{i\in I}$. Let $e_i$ denotes the element of $R$   whose the $i$-th component is $1$ and the others are $0$. Notice that if $S$ is a multiplicative subset of $R$ then, for each $i\in I$, the subset $e_iS:=\{r_i\in R_i / \pi_i(s)=r_i \text{ for some } s\in S\}$, where $\pi_i: R\to R_i$ denote the canonical projection, is  multiplicative subset of $R_i$ (which of course maybe contains zero). Conversely,  if for each $i$, $S_i$ is a multiplicative subset of $R_i$, then $S = \prod_{i\in I} S_i$  is a multiplicative subset of $R$.  

\begin{prop}\label{prop-product-change-result} 
Let $R$ be the direct product of a family of commutative rings $\{R_i\}_{i\in I}$ and let $S$ be a multiplicative subset of $R$.  Then we have : 
\begin{enumerate}
\item $\Su\left(\prod R_i\right) = \prod e_i\Su(R_i)$. 
\item $\Sidem\left(\prod R_i\right) = \prod e_i\Sidem(R_i)$. 
\item $\Svnr\left(\prod R_i\right) = \prod e_iS\text{-}\vnr(R_i)$. 
\item If $I$ is finite, then $\Spreg\left(\prod R_i\right) = \prod e_i\Spreg(R_i)$,
\item If $I$ is finite, then $\Snil\left(\prod R_i\right) = \prod e_i\Snil(R_i)$. In particular, $R$ is (weakly) $S$-reduced if and only if each $R_i$ is (weakly) $S_i$-reduced.
\end{enumerate}
\end{prop}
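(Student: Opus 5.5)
The whole argument reduces to componentwise calculations in the product. The notation $\prod e_i\Su(R_i)$ is read as $\prod_i (e_iS)\text{-}\u(R_i)$, where $e_iS=\pi_i(S)$, and similarly for the other sets. Write $a=(a_i)_i$ and $s=(s_i)_i\in S$. Each defining relation ($ab=s$, $sa=a^2$, $sa=a^2b$, $sa^n=a^{2n}b$, $sa^n=0$) holds in $R$ if and only if it holds in every coordinate. Applying $\pi_i$, which is a surjective homomorphism, together with Proposition~\ref{prop-homorphism-change-result}, gives at once the inclusion ``$\subseteq$'' in every item: for instance, $a\in\Svnr(R)$ implies $\pi_i(a)\in\pi_i(S)\text{-}\vnr(R_i)$ for each $i$. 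So all the work is in the reverse inclusions.

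For ``$\supseteq$'', take $a$ with $a_i$ in the relevant $\pi_i(S)$-set for each $i$. Choose witnesses $t^{(i)}\in S$ and $b_i\in R_i$ with, say, $\pi_i(t^{(i)})a_i=a_i^2b_i$. The difficulty is that the element of $S$ has to be a single $s\in S$ that works in all coordinates at once, while the $i$-th coordinates of the $t^{(i)}$ come from different elements of $S$.

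\textbf{Finite $I$.} Set $s=\prod_{i\in I} t^{(i)}\in S$. Then $\pi_i(s)=\pi_i(t^{(i)})\,c_i$ with $c_i=\prod_{j\neq i}\pi_i(t^{(j)})$. Multiplying the $i$-th relation by $c_i$ gives $\pi_i(s)a_i=a_i^2(c_ib_i)$, so $b=(c_ib_i)_i$ witnesses $sa=a^2b$.
\begin{itemize}
\item For $S$-invertibility, $a_ib_i=\pi_i(t^{(i)})$ becomes $a_i(c_ib_i)=\pi_i(s)$.
\item For $S$-$\pi$-regularity, the exponents $n_i$ can differ. Take $n=\prod n_i$, or $\max n_i$ after noting that $sa^m=a^{2m}b$ propagates to all larger exponents of suitable form; more safely, raise the relation $t a^{k}=a^{2k}b$ to the power $m$ to get $t^m a^{km}=a^{2km}b^m$, and use $n=\prod n_i$.
\item For $S$-nilpotency, use $n=\max n_i$.
\item For the $S$-idempotent case, $\pi_i(t^{(i)})a_i=a_i^2$ multiplied by $c_i$ gives $\pi_i(s)a_i=c_ia_i^2$. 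This is not of the form $\pi_i(s)a_i=a_i^2$, so the product trick does not directly preserve $S$-idempotence. I would handle this by comparing with the argument that $\u(R)$-idempotents are the von Neumann regular elements, or by checking whether the intended statement only uses a common $s$ coming from the structure of $S$.
\end{itemize}
The ``in particular'' in (5) then follows: $\Snil(R)=0$, respectively $\Snil(R)$ being $S$-torsion (via Proposition~\ref{prop-s-red-iff-s-torR-is-s-tor}), is checked coordinatewise using the same common-$s$ construction, with $S_i=\pi_i(S)$.

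\textbf{Infinite $I$.} This is the main obstacle, since an infinite product of the $t^{(i)}$ is not available in $S$. For items (1)--(3) I would argue that, because $\pi_i(S)$ is the set of $i$-th coordinates of $S$, one can still choose a single $s\in S$ whose coordinates all work. If $S$ is of product form $\prod S_i$, take $s=(\pi_i(t^{(i)}))_i$ directly. For general $S$ this step needs care and may require that hypothesis, so I would first establish the statement for $S=\prod S_i$ and then check whether the general case reduces to it. For (4) and (5) finiteness is genuinely needed, because the exponents $n_i$ may be unbounded and no common $n$ exists.
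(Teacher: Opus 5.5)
\textbf{The gap: (2), and (1)--(3) for infinite $I$, cannot be proved for general $S$.} Your componentwise set-up is fine, and your cofactor trick for finite $I$ is correct. Taking $s=\prod_j t^{(j)}$ and absorbing $c_i=\prod_{j\neq i}\pi_i(t^{(j)})$ into the witness proves the reverse inclusions in (1), (3), (4), (5) for finite $I$ and an \emph{arbitrary} $S$; the paper itself only says ``routine''. But you never prove (2), and for infinite $I$ you only hope that a single $s$ can be chosen. Neither gap can be closed, because for $S$ not of product form the statement is false.

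For (2) with $|I|=2$, let $R=\mathbb{Z}\times\mathbb{Z}$ and $S=\{(2^k,3^k):k\geq 0\}$, so $e_1S=\{2^k\}$ and $e_2S=\{3^k\}$. Then $2$ is $e_1S$-idempotent via $2\cdot 2=2^2$, and $1$ is $e_2S$-idempotent via $3^0$. However, $(2^k,3^k)(2,1)=(2^{k+1},3^k)$ cannot equal $(2,1)^2=(4,1)$, since that would need $k=1$ and $k=0$ at once. Hence $(2,1)\notin\Sidem(R)$. In a domain, $sa=a^2$ with $a\neq 0$ forces $s=a$. So the element of $S$ witnessing idempotence is rigid and no cofactor can be absorbed. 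This is exactly the obstruction you hit, and no detour through von Neumann regular elements avoids it.

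For infinite $I$, let $R=\prod_{n\geq 1}\mathbb{Z}$, let $S$ be the constant sequences $(2^k)_{n\geq 1}$, and let $a=(2^n)_n$. Each $2^n$ is $e_nS$-invertible, hence also $e_nS$-von Neumann regular, and it is $e_nS$-idempotent. Yet $ab=(2^k)_n$ or $(2^k)_n\,a=a^2b$ would force $2^n\mid 2^k$ for all $n$. So (1)--(3) all fail; for (2), use $\Sidem(R)\subseteq\Svnr(R)$.

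\textbf{The fix: assume $S=\prod_{i\in I}S_i$.} With this hypothesis, $e_iS=S_i$, and it is presumably what the paper's ``routine'' proof tacitly assumes. The reverse inclusions in (1)--(3) then hold for arbitrary $I$ in one line. If $s_i\in S_i$ witnesses the property for $a_i$, then $s=(s_i)_i$ already lies in $S$ and witnesses it for $a$, with no cofactors, so (2) is handled exactly like (1) and (3).

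For (4) and (5), the same choice plus a common exponent works: your $n=\prod n_i$ with $s_i,b_i$ replaced by $s_i^{n/n_i},b_i^{n/n_i}$, respectively $n=\max n_i$. Finiteness of $I$ is needed only for that exponent, as you say.

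So do not try to reduce the general case to the product case, since the general case is false. State the product hypothesis for (2) and for infinite $I$. Then record your cofactor argument separately as a proof of (1), (3)--(5) for finite $I$ without it.
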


\begin{proof}
The proof is routine. 
\end{proof}   
Now, we state a uniformly \( S \)-version of Proposition~\ref{prop-product-change-result}.
\begin{prop}\label{prop-product-change-uni-result}
Let $R$ be the direct product of a family of commutative rings $\{R_i\}_{i\in I}$ and let $S$ be a multiplicative subset of $R$. Then we have :
\begin{enumerate}
\item $s\text{-}\u(\prod R_i)= \prod s_i\text{-}\u(R_i)$;        
\item $s\text{-}\idem(\prod R_i)= \prod s_i\text{-}\idem(R_i)$; 
\item $s\text{-}\vnr(\prod R_i)= \prod s_i\text{-}\vnr(R_i)$; 
\item If $I$ is finite, then $s\text{-}\preg(\prod R_i)= \prod s_i\text{-}\preg(R_i)$.
\item If $I$ is finite, then $s\text{-}\nil(\prod R_i)= \prod s_i\text{-}\nil(R_i)$. In particular, $R$ is uniformly $S$-reduced if and only if each $R_i$ is uniformly $S_i$-reduced.
\end{enumerate}
\end{prop}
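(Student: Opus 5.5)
The plan is to reduce everything to a componentwise check, with $s=(s_i)_{i\in I}\in S$ and $s_i=\pi_i(s)$. The key observation is that in $R=\prod R_i$ all ring operations are computed coordinatewise. So an equation of the form $P(a,b,s)=0$, where $P$ is a fixed polynomial expression, holds in $R$ exactly when $P(a_i,b_i,s_i)=0$ holds in every $R_i$. For items (1)--(3) the witness $b$ can be chosen freely in each coordinate, so both inclusions follow at once.

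\emph{Item (1).} If $ab=s$ in $R$, then $a_ib_i=s_i$ for every $i$. Conversely, if for each $i$ we have some $b_i\in R_i$ with $a_ib_i=s_i$, then we set $b=(b_i)_{i\in I}$ and obtain $ab=s$.

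\emph{Item (2).} We have $sa=a^2$ if and only if $s_ia_i=a_i^2$ for all $i$. No witness is needed here.

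\emph{Item (3).} As in (1), $sa=a^2b$ holds in $R$ if and only if $s_ia_i=a_i^2b_i$ holds for each $i$. Here the witness $b=(b_i)_{i\in I}$ is assembled from the coordinate witnesses, which the direct product permits even when $I$ is infinite.

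\emph{Items (4) and (5).} Only the exponent $n$ behaves differently, because it must be a single integer for all coordinates; this is exactly why $I$ is assumed finite. The inclusion ``$\subseteq$'' is again immediate by projecting. For ``$\supseteq$'', suppose $s_ia_i^{n_i}=a_i^{2n_i}b_i$ for $i=1,\dots,k$.
\begin{itemize}
\item Put $N=n_1n_2\cdots n_k$ (or their lcm) and write $N=m_in_i$.
\item In each coordinate we need $s_ia_i^N=a_i^{2N}c_i$. From $s_ia_i^{n_i}=a_i^{2n_i}b_i$ we get $s_i^{m}a_i^{mn_i}=a_i^{2mn_i}b_i^{m}$ by induction on $m$. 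The induction step is $s_i^{m+1}a_i^{(m+1)n_i}=s_ia_i^{n_i}\cdot s_i^ma_i^{mn_i}$, followed by substitution.
\item This produces the factor $s_i^{m_i}$ instead of $s_i$. To make the power of $s$ uniform we must take care, since the statement fixes the same $s$.
\end{itemize}

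The main obstacle is this mismatch in the power of $s$. I would first try to avoid it altogether by reducing exponents. If $sa^n=a^{2n}b$, then for every $N\ge n$ we have
\[
sa^N=a^{N-n}sa^n=a^{N+n}b=a^{2N}\,ba^{\,n-N}\,?
\]
but the right-hand side only makes sense when $N\le n$. So instead I would use the other direction: multiplying by $a^{N-n}$ gives $sa^N=a^{2N}(b\,a^{-(N-n)})$, which is again invalid. The correct manipulation is this. Set $e=a^nb$, so that $se=e^2$. Then $sa^n=a^ne$, hence $sa^N=a^Ne$ for every $N\ge n$. Iterating,
\[
s^2a^N=a^Ne\cdot s=a^Ne^2/\!\!\!\;?
\]
which can be rewritten as $s^2a^N=a^Ne^2=a^{N+2n}b^2$.

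Hence, with a single exponent $N=\max_i n_i$, one obtains $s_ia_i^N=a_i^{N+n_i}b_i$. Multiplying by $a_i^{N-n_i}$ appropriately relates this to $a_i^{2N}$ at the cost of extra $s_i$ factors. If exact $s$ is required, I would accept the statement in the $S$-version (4) of Proposition~\ref{prop-product-change-result}, where $s$ may be replaced by a power of $s$, and note that uniformity in $s$ is preserved up to powers.

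For (5) the issue disappears. If $s_ia_i^{n_i}=0$, then $s_ia_i^N=0$ for $N=\max_i n_i$, so $sa^N=0$. The ``in particular'' clause follows because $\nil(R)=\prod_i\nil(R_i)$ for finite $I$, and uniform torsion by $s$ holds exactly when it holds by each $s_i$ coordinatewise.
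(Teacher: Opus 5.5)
Your coordinatewise argument for (1)--(3) and (5) is the routine one; the paper gives no proof of this proposition at all. In the ``in particular'' part of (5) you need one more step for the converse. If $t_i=\pi_i(u_i)$ with $u_i\in S$ kills $\nil(R_i)$, then $u=\prod_i u_i\in S$ kills $\nil(R)$; the witnesses $t_i$ need not be coordinates of a single element of $S$.

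The genuine gap is (4). You never prove ``$\supseteq$'' with the \emph{same} $s$, and your final sentence replaces the claim by a weaker one. The obstruction you hit is real: (4) is false as stated, already for two factors. Take $R=\mathbb{Z}\times\mathbb{Z}/4\mathbb{Z}$, $S=\{(2^k,\bar 1): k\ge 0\}$, $s=(2,\bar 1)$ and $a=(2,\bar 2)$.
\begin{itemize}
\item $2\in 2\text{-}\preg(\mathbb{Z})$, since $2\cdot 2=2^2\cdot 1$.
\item $\bar 2\in\bar 1\text{-}\preg(\mathbb{Z}/4\mathbb{Z})$, since $\bar 2^{2}=\bar 0=\bar 2^{4}\cdot\bar 0$.
\item An equation $sa^n=a^{2n}c$ would give $2^{n+1}=2^{2n}c_1$ in $\mathbb{Z}$. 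This forces $n=1$, and then $\bar 2=\bar 2^{2}c_2=\bar 0$ in $\mathbb{Z}/4\mathbb{Z}$, which is absurd.
\end{itemize}
With the bare $s$, the first coordinate admits only exponent $1$ and the second forbids it, so finiteness of $I$ does not help.

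What your scratch lines ending in ``?'' were reaching for is the following. If $sa^n=a^{2n}b$ and $N\ge n$, then $sa^N=a^{N+n}b$, and by induction $s^ka^N=a^{N+kn}b^k$ for every $k\ge 1$. Take $N=\max_i n_i$ and $k=N$. Then $s_i^Na_i^N=a_i^{2N}\,a_i^{N(n_i-1)}b_i^N$ in every coordinate, so for finite $I$ each element of $\prod_i s_i\text{-}\preg(R_i)$ lies in $s^N\text{-}\preg(R)$ for some $N$. This gives (4) as stated whenever $s^2=s$. It also gives the non-uniform Proposition~\ref{prop-product-change-result}(4), once you multiply the witnesses from $S$. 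However, $N$ depends on $a$, so your claim that ``uniformity in $s$ is preserved up to powers'' holds only elementwise. The right treatment of (4) is therefore the counterexample together with this corrected inclusion, not a proof.
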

\section{\texorpdfstring{(Uniformly) $S$-Boolean  and uniformly $S$-$\pi$-regular rings}{(Uniformly) S-Boolean and uniformly S-pi-regular rings}} Following Zhang~\cite{Zha1}, a ring $R$ is said to be \emph{uniformly $S$-von Neumann regular} if there exists an element $s \in S$ such that every element of $R$ is $S$-von Neumann regular with respect to $s$. Moreover, by \cite[Proposition~3.10]{Zha1}, one has that $R_S$ is von Neumann regular if and only if every element of $R$ is $S$-von Neumann regular. 

Similarly, at first glance, it may seem natural to consider the class of rings in which every element is $S$-$\pi$-regular. However, this situation occurs precisely when the localization $R_S$ is $\pi$-regular.  Indeed, if every element \(a \in R\) is \(S\)-\(\pi\)-regular, then by definition there exist \(s \in S\), \(b \in R\), and \(n \geq 1\) such that 
$sa^n = a^{2n}b$. Then, for any \(t \in S\), we have 
\[
\Bigl(\tfrac{a}{t}\Bigr)^n 
= \tfrac{sa^n}{st^n} 
= \tfrac{a^{2n}b}{st^n} 
= \Bigl(\tfrac{a}{t}\Bigr)^{2n} \tfrac{t^n b}{s},
\]
which shows that \(\tfrac{a}{t}\) is \(\pi\)-regular in \(R_S\). Conversely, suppose that \(R_S\) is \(\pi\)-regular. Then for each \(a \in R\) there exist \(n \geq 1\) and \(\tfrac{b}{t} \in R_S\) such that 
\[
\Bigl(\tfrac{a}{1}\Bigr)^n = \Bigl(\tfrac{a}{1}\Bigr)^{2n}\tfrac{b}{t}.
\] 
Clearing denominators, we obtain $sa^n = a^{2n}b'$, for some \(s \in S\) and \(b' \in R\). Hence \(a\) is \(S\)-\(\pi\)-regular.

Thus, the property that "every element is \(S\)-\(\pi\)-regular" does not provide anything essentially new. In contrast, the situation is different for \(S\)-idempotent elements, since, as shown in Example~\ref{uni-bool}, the fact that every element of \(R\) is \(S\)-idempotent does not imply that \(R_S\) is Boolean.  

These observations motivate the following definitions.

\begin{defns}\label{section-3-defns}
Let \(S\) be a multiplicative subset of a commutative ring \(R\). 
\begin{enumerate}
\item The ring \(R\) is called \emph{\(S\)-Boolean} if every element of \(R\) is \(S\)-idempotent. It is said to be \emph{uniformly \(S\)-Boolean} if there exists \(s \in S\) such that every non-idempotent element of \(R\) is \(S\)-idempotent with respect to \(s\).  

\item The ring \(R\) is called \emph{uniformly \(S\)-\(\pi\)-regular} if there exists \(s \in S\) such that every  element of \(R\) is \(S\)-\(\pi\)-regular with respect to \(s\).  
\end{enumerate}
\end{defns}

\begin{rems}\label{rems-on-defn} 
The following assertions can be verified easily:
\begin{enumerate}

\item Any Boolean ring is a uniformly \( S \)-Boolean ring (with respect to \(1\)), and any uniformly \( S \)-Boolean ring is an \( S \)-Boolean ring.
 
\item Any uniformly \( S \)-Boolean ring is uniformly \( S \)-von Neumann regular, and any uniformly \( S \)-von Neumann regular ring is uniformly \( S \)-\( \pi \)-regular.
 
\item Any \( \pi \)-regular ring is uniformly \( S \)-\( \pi \)-regular.
\end{enumerate}
\end{rems}
 
Below we provide examples showing that the converses of the assertions in Remarks~\ref{rems-on-defn} do not hold in general.  
We first give an example of a uniformly \( S \)-Boolean ring which is not Boolean.

\begin{exmp}\label{uni-bool}
Consider the ring \( R = \mathbb{Z}_6 \), and let \( S = \{ \bar{1}, \bar{5}\} \) be a subset of \( R \).  
It is evident that the idempotent elements of \( R \) are \( \bar{0}, \bar{1}, \bar{3}, \bar{4} \), while \( \bar{2} \) and \( \bar{5} \) are not idempotent.  
Take \( s=\bar{5} \). Then 
\[
(\bar{2})^2 = \bar{2}s \qquad \text{and} \qquad (\bar{5})^2 = \bar{5}s.
\]
Therefore, every non-idempotent element of \( R \) is an \( S \)-idempotent with respect to \( s=\bar{5} \).  
Thus \( R \) is a uniformly \( S \)-Boolean ring.
\end{exmp}
 
Next, we give an example of a uniformly \( S \)-Boolean ring which is not an \( S \)-Boolean ring.

\begin{exmp}\label{exmp-S-vN-reg-not-u-S-vN-reg}
Consider the ring \( R = \mathbb{Z}_3 \times \mathbb{Z}_3 \) and the multiplicatively closed subset 
\[
S = \{ (\bar{1}, \bar{1}), (\bar{1}, \bar{2}), (\bar{2}, \bar{1}), (\bar{2}, \bar{2}) \}.
\]
It is easy to verify that the non-idempotent elements of \( R \) are 
\[
\alpha_1 = (\bar{2}, \bar{0}), \quad 
\alpha_2 = (\bar{0}, \bar{2}), \quad 
\alpha_3 = (\bar{1}, \bar{2}), \quad 
\alpha_4 = (\bar{2}, \bar{1}), \quad 
\alpha_5 = (\bar{2}, \bar{2}).
\]
For the elements \( \alpha_1 \) and \( \alpha_3 \), we take \( s = (\bar{1}, \bar{2}) \in S \), and observe that 
\[
\alpha_1^2 = s \alpha_1, \qquad \alpha_3^2 = s \alpha_3.
\]
Similarly, for \( \alpha_2 \) and \( \alpha_4 \), let \( s' = (\bar{2}, \bar{1}) \in S \). Then 
\[
\alpha_2^2 = s' \alpha_2, \qquad \alpha_4^2 = s' \alpha_4.
\]
Finally, for the element \( \alpha_5 \), take \( s'' = (\bar{2}, \bar{2}) \in S \), and we find that 
\[
\alpha_5^2 = s'' \alpha_5.
\]
Clearly, we cannot find a single \( s\in S \) such that each non-idempotent element of \( R \) is an \( S \)-idempotent with respect to \( s \).  
However, for every \( a \in R \), there exists some \( s \in S \) such that \( a \) is an \( S \)-idempotent element of \( R \).  
Thus \( R \) is not a uniformly \( S \)-Boolean ring, but it is an \( S \)-Boolean ring.
\end{exmp}

Here is an example of a uniformly \( S \)-von Neumann regular ring which is not uniformly \( S \)-Boolean:

\begin{exmp}[\cite{Zha4}, Example 3.18]\label{exmp-u-s-von-neum-not-u-s-bool}
Let \( T = \mathbb{Z}_2 \times \mathbb{Z}_2 \) and \( s = (1, 0) \in T \).  
Let \( R = T[x]/\langle sx, x^2 \rangle \), where \( x \) is an indeterminate, and let \( S = \{1, s\} \) be a multiplicative subset of \( R \).  
Then, by \cite[Example 3.18]{Zha4}, \( R \) is a uniformly \( S \)-von Neumann regular ring, but \( R \) is not von Neumann regular.  
Moreover, \( R \) is not uniformly \( S \)-Boolean; indeed, if \( R \) were uniformly \( S \)-Boolean, then it must be with respect to \( s \), since \( R \) is not Boolean.  
But
\[
s(1+\overline{x}) = s+s\overline{x} = (1+\overline{x})^2 = 1+\overline{x}^2 = 1+s\overline{x},
\]
which implies \( 1=s \), a contradiction.  
Thus \( R \) is not uniformly \( S \)-Boolean as desired. Furthermore, observe that \(R_S = R\) is not a Boolean ring. This shows that the condition "every element of \(R\) is \(S\)-idempotent" does not necessarily imply that the localization \(R_S\) is Boolean, in agreement with the remark preceding Definitions~\ref{section-3-defns}.
\end{exmp}

For the other counterexamples, we will use the following result.

\begin{prop}\label{prop-product-change-result-for-rings} 
Let \( R \) be the direct product of a family of commutative rings \( \{R_i\}_{i\in I} \) and let \( S \) be a multiplicative subset of \( R \).  
Then 
\begin{enumerate}
\item \( R \) is (uniformly) \( S \)-Boolean if and only if, for each \( i\in I \), \( R_i \) is (uniformly) \( S_i \)-Boolean. 
\item \( R \) is uniformly \( S \)-von Neumann regular if and only if, for each \( i\in I \), \( R_i \) is uniformly \( S_i \)-von Neumann regular. 
\item If \( I \) is finite, then \( R \) is uniformly \( S \)-\( \pi \)-regular if and only if, for each \( i\in I \), \( R_i \) is uniformly \( S_i \)-\( \pi \)-regular.
\end{enumerate}
\end{prop}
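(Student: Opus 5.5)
The plan is to reduce everything to the componentwise descriptions already obtained in Propositions~\ref{prop-product-change-result} and~\ref{prop-product-change-uni-result}. Throughout, $S_i$ denotes $e_iS=\pi_i(S)$, and a fixed $s\in S$ is written as $s=(s_i)_{i\in I}$ with $s_i=\pi_i(s)\in S_i$. All operations in $R=\prod R_i$ are componentwise, so $a=(a_i)$ satisfies $sa=a^2$ (resp.\ $sa=a^2b$, resp.\ $sa^n=a^{2n}b$) if and only if $s_ia_i=a_i^2$ (resp.\ $s_ia_i=a_i^2b_i$, resp.\ $s_ia_i^n=a_i^{2n}b_i$) holds for every $i$.

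For the direction ``$R$ has the property $\Rightarrow$ each $R_i$ has it'', take any $r\in R_i$ and embed it as the element $a\in R$ with $i$-th coordinate $r$ and all other coordinates $0$. If $R$ is uniformly $S$-von Neumann regular with respect to $s$, then $sa=a^2b$ for some $b$, and projecting gives $s_ir=r^2b_i$, so $R_i$ is uniformly $S_i$-von Neumann regular with respect to $s_i$. The same projection argument handles the $S$-Boolean and $\pi$-regular cases; the latter simply keeps the exponent $n$.

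For the non-uniform $S$-Boolean property, the converse is exactly Proposition~\ref{prop-product-change-result}(2). Indeed, given $a=(a_i)$ with each $a_i$ being $S_i$-idempotent, one must produce a single $s\in S$ whose $i$-th coordinates work simultaneously. Here I would read the statement with $S=\prod S_i$, as in the paragraph preceding Proposition~\ref{prop-product-change-result}, so that choosing $t_i\in S_i$ with $t_ia_i=a_i^2$ for each $i$ gives $s=(t_i)\in S$. For the uniform versions, $R_i$ is uniform with respect to some $t_i$, and I would take $s=(t_i)\in\prod S_i$; then parts (1) and (3) of Proposition~\ref{prop-product-change-uni-result} give $s$-$\idem(R)=\prod t_i$-$\idem(R_i)=R$, and similarly for $\vnr$.

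The main obstacle is the uniformly $S$-Boolean case, because its definition only requires non-idempotent elements to be $S$-idempotent with respect to $s$. An element $a\in R$ can be non-idempotent while some coordinates $a_j$ are idempotent in $R_j$ but not $t_j$-idempotent. My fix is to choose the witnesses $t_i$ so that every element of $R_i$ is $t_i$-idempotent, including idempotents, when this is possible (e.g.\ $t_i=1$ if $R_i$ is Boolean). Otherwise I would replace $t_i$ by $t_i^2$ and use $e=e^2$ to check $t_i^2e$ versus $e$, restricting the claim if necessary to the convention that ``uniformly'' means all elements. A second potential subtlety is the $\pi$-regular case (3), where the exponents $n_i$ vary with $i$. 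Finiteness of $I$ lets me take $n=\prod n_i$ or $n=\max n_i$, using that $s_ia_i^{n}=a_i^{2n}b$ persists when $n$ is replaced by a multiple. To see this, multiply $s_ia_i^{n}=a_i^{2n}b$ by $s_i^{k-1}a_i^{n(k-1)}$ and iterate, absorbing powers of $s_i$ into the uniform witness. This absorption changes the witness from $s_i$ to a power $s_i^{k}$, which is harmless for the existential part, but I would need to verify that it still yields one common $s$.
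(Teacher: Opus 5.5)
Your forward implications are correct: embed $r\in R_i$ as the element with all other coordinates $0$ and project. That element is non-idempotent exactly when $r$ is, so the uniform Boolean case is covered too. The non-uniform $S$-Boolean converse and the uniformly $S$-von Neumann regular converse are also correct. Your reading $S=\prod S_i$ is necessary: for $S=\{(\bar1,\bar1),(\bar2,\bar2)\}\subseteq\mathbb{Z}_3\times\mathbb{Z}_3$, both $\pi_i(S)=\{\bar1,\bar2\}$ and both factors are $\pi_i(S)$-Boolean, yet $(\bar1,\bar2)$ is not $S$-idempotent.

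\textbf{Uniform Boolean converse.} This is a genuine gap, and it cannot be closed: the obstacle you found is fatal and the assertion is false. The paper's own Example~\ref{exmp-S-vN-reg-not-u-S-vN-reg} is a counterexample. In $\mathbb{Z}_3$ with $S_i=\{\bar1,\bar2\}$, the only non-idempotent is $\bar2$, and $\bar2\cdot\bar2=\bar2^2$, so each factor is uniformly $S_i$-Boolean with respect to $\bar2$. In $\mathbb{Z}_3\times\mathbb{Z}_3$ with $S=S_1\times S_2$, however, the non-idempotent $(\bar2,\bar0)$ forces $s_1=\bar2$, while the non-idempotent $(\bar1,\bar2)$ forces $s_1=\bar1$. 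Neither of your patches helps. Squaring the witness gives $\bar2^2=\bar1$, which fails on $\bar2$. Requiring every element, including $1$, to be $t_i$-idempotent forces $t_i=t_i\cdot 1=1^2=1$, i.e.\ $R_i$ Boolean, which is a different statement. The paper's one-line appeal to Proposition~\ref{prop-product-change-uni-result} misses the same point. The identity $s\text{-}\idem(\prod R_i)=\prod s_i\text{-}\idem(R_i)$ does hold, but the non-idempotents of $\prod R_i$ are not the products of non-idempotents of the $R_i$.

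\textbf{Part (3).} This is not established either. Passing from $s_ia_i^{n_i}=a_i^{2n_i}b_i$ to a common exponent $N$ replaces the witness $s_i$ by $s_i^{N/n_i}$ (or $s_i^{\lceil N/n_i\rceil}$). That power depends on the element, so no single $s$ results. You flag this but do not resolve it, and it is not mere bookkeeping. Take $R_1=k[t,a]/(a^2-ta)$ with $k$ a field. Then $R_1=k[t]\oplus k[t]a$, $aR_1=k[t]a$ and $a^n=t^{n-1}a$. Hence $ta^n\in a^{2n}R_1$ iff $t^n\in t^{2n-1}k[t]$, i.e.\ iff $n=1$. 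In $R_2=k[y]/(y^2)$, $y^n\in y^{2n}R_2$ iff $n\geq 2$. So $(a,y)$ is not $\pi$-regular with respect to $(t,1)$, although $a\in t\text{-}\preg(R_1)$ and $y\in 1\text{-}\preg(R_2)$. Thus the componentwise identity of Proposition~\ref{prop-product-change-uni-result}(4), on which the paper's proof relies, is false, and in general the exponent cannot be raised with a fixed witness.

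A proof of (3), if one exists, must use the ring-level hypothesis on each factor. For instance, every power $a_i^k$ is again $\pi$-regular with respect to $t_i$, so the admissible exponents of $a_i$ meet every $k\mathbb{N}$. It must also produce a witness independent of the element. Your proposal does neither.
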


\begin{proof}
This follows immediately from Proposition~\ref{prop-product-change-uni-result}.
\end{proof}

We now give an example of a uniformly \( S \)-\( \pi \)-regular ring which is not \( \pi \)-regular.

\begin{exmp}\label{exmp-u-s-pi-not-pi}
Let \( R' \) be a \( \pi \)-regular ring and let \( R'' \) be a ring which is not \( \pi \)-regular.  
Set \( R=R'\times R'' \) and \( S=S'\times \{0\} \), where \( S' \) is any multiplicative subset of \( R' \).  
Then \( R \) is uniformly \( S \)-\( \pi \)-regular with respect to \( (1,0) \), but it is not \( \pi \)-regular.
\end{exmp}

Finally, we give an example of a uniformly \( S \)-\( \pi \)-regular ring which is not uniformly \( S \)-von Neumann regular.  Notice that a ring \( R \) is von Neumann regular (resp., \( \pi \)-regular) if and only if it is uniformly \( \u(R) \)-von Neumann regular (resp., uniformly \( \u(R) \)-\( \pi \)-regular), where \( \u(R) \) denotes the group of units of \( R \).

\begin{exmp}\label{exmp-u-s-pi-not-u-s-von-reg}
Let \( R' \) be a \( \pi \)-regular ring and let \( R'' \) be a \( \pi \)-regular ring which is not von Neumann regular.  
Set \( R=R'\times R'' \) and \( S=S'\times \u(R'') \), where \( S' \) is any multiplicative subset of \( R' \).  
Then \( R \) is uniformly \( S \)-\( \pi \)-regular with respect to \( (1,1) \), but it is not uniformly \( S \)-von Neumann regular.
\end{exmp}

Recall that a ring \(R\) is called \(S\)-Artinian if every descending sequence \(\{I_n\}_{n \in \mathbb{N}}\) of ideals of \(R\) is \(S\)-stationary. This means there exist a positive integer \(k\) and an element \(s \in S\) such that for all \(n \geq k\), \(sI_k \subseteq I_n\) \cite{Sev1}.  

A ring \(R\) is said to be uniformly \(S\)-Artinian if the element \(s\) in the definition above can be chosen independently of the descending sequence; that is, there exists \(s \in S\) such that every descending sequence \(\{I_n\}_{n \in \mathbb{N}}\) of ideals of \(R\) is \(S\)-stationary with respect to \(s\) \cite{Zha7}.  

\begin{prop}\label{r4}
The following statements hold true.
\begin{enumerate}
\item If \(R\) is \(S\)-Artinian, then every element of \(R\) is \(S\)-\(\pi\)-regular; equivalently, \(R_S\) is a \(\pi\)-regular ring.  
\item Any uniformly \(S\)-Artinian ring is uniformly \(S\)-\(\pi\)-regular.  
\end{enumerate}
\end{prop}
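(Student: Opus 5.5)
For both parts the plan is the classical argument that Artinian rings are $\pi$-regular, applied to the descending chain of principal ideals generated by the powers of a fixed element. Fix $a\in R$ and consider
\[
Ra \supseteq Ra^2 \supseteq Ra^3 \supseteq \cdots .
\]

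For (1), assume $R$ is $S$-Artinian. Then this chain is $S$-stationary: there exist $k\geq 1$ and $s\in S$ with $sRa^k\subseteq Ra^n$ for all $n\geq k$. Taking $n=2k$ gives $sa^k\in Ra^{2k}$, that is, $sa^k=a^{2k}b$ for some $b\in R$. Hence $a$ is $S$-$\pi$-regular with respect to $s$ and with exponent $n=k$. The equivalence with $R_S$ being $\pi$-regular was already established in the discussion preceding Definitions~\ref{section-3-defns}, so nothing further is needed there.

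For (2), assume $R$ is uniformly $S$-Artinian with respect to a fixed $s\in S$. Every chain $\{Ra^n\}$ is then $S$-stationary with respect to this same $s$. The argument of (1) therefore produces, for each $a\in R$, an integer $k_a\geq 1$ and $b_a\in R$ with $sa^{k_a}=a^{2k_a}b_a$. Since the definition of uniform $S$-$\pi$-regularity fixes only $s$ and lets the exponent and the witness $b$ depend on $a$, this shows $R$ is uniformly $S$-$\pi$-regular with respect to $s$.

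The main point requiring care is the bookkeeping of the definition of $S$-stationarity. One must check that the inclusion $sI_k\subseteq I_n$ holds for \emph{all} $n\geq k$, and in particular for $n=2k$, so that the exponent $2k$ matches the form $sa^n=a^{2n}b$ exactly. It also helps that $k\geq 1$, so that $n=k$ is an admissible exponent; if the stationarity index could be $0$, one would simply replace it by a larger index, since the inclusion holds for every $n\geq k$. Beyond this, both parts are direct.
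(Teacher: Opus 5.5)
Your proposal is correct and follows the paper's proof: the paper likewise applies $S$-stationarity to the chain $Rr\supseteq Rr^2\supseteq\cdots$, takes $n=2k$ to get $sRr^k\subseteq Rr^{2k}$, cites the discussion before Definitions~\ref{section-3-defns} for the equivalence with $R_S$ being $\pi$-regular, and handles the uniform case with the same argument using the fixed $s$. Your remark on enlarging the stationarity index is also valid, since the chain is descending.
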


\begin{proof}
1. Assume that \(R\) is \(S\)-Artinian. Let \(r \in R\) and consider the descending chain of ideals
\[
Rr \supseteq Rr^2 \supseteq Rr^3 \supseteq \cdots .
\]
By the \(S\)-Artinian property, there exist a positive integer \(k\) and an element \(s \in S\) such that for all \(n \geq k\), $sRr^k \subseteq Rr^n$. In particular, \(sRr^k \subseteq Rr^{2k}\), which shows that \(r\) is \(S\)-\(\pi\)-regular. The equivalence with \(R_S\) being \(\pi\)-regular follows from the discussion preceding Definitions~\ref{section-3-defns}.  

2. This follows by a similar argument.
\end{proof}

Of course, a uniformly $S$-$\pi$-regular ring need not be uniformly $S$-Artinian; one can construct a counterexample using arguments similar to those in Examples~\ref{exmp-u-s-pi-not-pi} and \ref{exmp-u-s-pi-not-u-s-von-reg}.

Recall from \cite[Theorem 2.6]{And1} that a commutative ring $R$ is Boolean if and only if every element of $R$ is either idempotent or nilpotent; that is,  $R = \idem(R) \cup \nil(R)$. Replacing Boolean with $S$-Boolean, we have:

\begin{prop}\label{s-idem-s-nil}
Let $S$ be a multiplicative subset of $R$ containing no units except $1$. Then $ R = \Sidem(R) \,\cup\, \nil(R)$ if and only if $R$ is an $S$-Boolean ring.
\end{prop}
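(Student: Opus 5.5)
The implication from $S$-Boolean to $R=\Sidem(R)\cup\nil(R)$ is immediate: if $R$ is $S$-Boolean then $R=\Sidem(R)$. The content is in the converse. Assume $R=\Sidem(R)\cup\nil(R)$. It suffices to show that every nilpotent element lies in $\Sidem(R)$. I will in fact show that every nilpotent element is $0$, following the idea of \cite[Theorem 2.6]{And1}.

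Let $a\in\nil(R)$. Then $1+a\in\u(R)$. If $R=0$ there is nothing to prove, so assume $R\neq 0$. In that case the unit $1+a$ is not nilpotent, and the hypothesis forces $1+a\in\Sidem(R)$. So there is $s\in S$ with
\[
s(1+a)=(1+a)^2 .
\]
Since $1+a$ is a unit, we may cancel it and obtain $s=1+a$.

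Hence $s$ is an element of $S$ that is a unit of $R$. By the hypothesis that $S$ contains no units other than $1$, we get $s=1$, and therefore $a=0$. In particular $a=0$ is idempotent, hence $S$-idempotent by Remarks~\ref{rems-follows-defn}. Thus $\nil(R)\subseteq\Sidem(R)$, so $R=\Sidem(R)$, i.e.\ $R$ is $S$-Boolean.

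The only delicate point is the cancellation step. It relies on $1+a$ being genuinely invertible (not merely $S$-invertible), which is what converts the $S$-idempotence equation into the equality $s=1+a$. The hypothesis on the units in $S$ is used exactly at that point. Along the way the argument shows $\nil(R)=0$, so such an $R$ is in fact reduced.
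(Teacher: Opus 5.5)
Your proof is correct and is essentially the paper's argument. The paper first shows that every unit is $S$-idempotent, hence equals some $s\in S$ and therefore equals $1$; it then uses $1+\nil(R)\subseteq U(R)=\{1\}$ to conclude $\nil(R)=0$, whereas you apply the same cancellation directly to the unit $1+a$, and your explicit treatment of $R=0$ (so that units are not nilpotent) makes precise a point the paper leaves implicit.
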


\begin{proof}
Assume that $R = \Sidem(R) \cup \nil(R)$. Then $U(R) \subseteq \Sidem(R)$. Let $a \in U(R)$. Then $a^2 = sa$ for some $s \in S$, which implies $a = s$, so $U(R) = \{1\}$. Consequently, $\nil(R) = \{0\}$ since $U(R) + \nil(R) = U(R)$. Therefore, $R = \Sidem(R)$, and hence $R$ is an $S$-Boolean ring. The converse is immediate.
\end{proof}

 It is not clear  whether the condition on \(S\) in the Proposition \ref{s-idem-s-nil} can be removed, nor whether \(\nil(R)\) can be replaced with \(\Snil(R)\); that is, if \(R = \Sidem(R) \cup \Snil(R)\), is \(R\) necessarily an \(S\)-Boolean ring? This question remains an open problem.
 
 Another result from \cite[Theorem 2.7]{And1} states that for a commutative ring $R$ with at least one nonzero zero-divisor, it suffices for $R$ to be Boolean that every zero-divisor is idempotent. Currently, we are unable to prove an $S$-version of this result; however, we can state the following partial result:

\begin{prop}
Let $s \in S$. Assume that $Z(R)$ contains a nonzero element $x \neq s$. If $\Zero(R) \subseteq s\text{-}\idem(R)$, then $s^2 y \in S$ for every $y \in R \setminus Z(R)$. Consequently, $R$ is uniformly $S$-von Neumann regular.
\end{prop}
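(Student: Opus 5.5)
The plan is to adapt the argument of \cite[Theorem 2.7]{And1}: first get a cancellation identity from a single zero-divisor, then remove that zero-divisor by pairing it with its ``$S$-complement'' $s-x$.

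\emph{Step 1 (a key identity).} Fix $y \in R\setminus Z(R)$ and let $w \in Z(R)$ be arbitrary. Since $Z(R)$ is closed under multiplication by elements of $R$, we have $wy \in Z(R)$. By hypothesis, both $w$ and $wy$ lie in $s\text{-}\idem(R)$, so $sw = w^2$ and $s(wy) = (wy)^2$. Substituting the first equation into the second gives
\[
s wy = w^2y^2 = (sw)y^2 ,
\]
that is, $swy(1-y)=0$. Because $y$ is regular, we may cancel $y$ and obtain $sw(1-y)=0$ for every $w\in Z(R)$.

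\emph{Step 2 (removing $w$).} I apply Step 1 to two zero-divisors whose sum is $s$. The first is the given $x$, which satisfies $sx = x^2$, that is, $x(s-x)=0$. Since $x \neq 0$, this shows $s-x \in Z(R)$; note that $s-x\neq 0$ because $x\neq s$, which is exactly where that hypothesis is used. Applying Step 1 to $w=x$ and to $w=s-x$ and adding the two identities yields
\[
s^2(1-y) = sx(1-y) + s(s-x)(1-y) = 0 .
\]
Hence $s^2y = s^2 \in S$, which proves the first claim.

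\emph{Step 3 (the consequence).} I claim $R$ is uniformly $S$-von Neumann regular with respect to $s^2$.
\begin{itemize}
\item If $z\in Z(R)$, then $sz=z^2$, so $s^2z = s z^2 = z^2\cdot s$.
\item If $y\notin Z(R)$, multiply $s^2y=s^2$ by $y$ to get $s^2y^2 = s^2 y$. Thus $s^2 y = y^2 \cdot s^2$.
\end{itemize}
Since $R = Z(R)\cup (R\setminus Z(R))$, every element is $S$-von Neumann regular with respect to the single element $s^2$.

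\emph{Main obstacle.} The only nonroutine point is Step 2: recognizing that $s-x$ is again a zero-divisor, so that $x$ can be eliminated from $sx(1-y)=0$. Everything else is a direct computation.
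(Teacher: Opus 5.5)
Your proof is correct and follows essentially the same route as the paper: you cancel the regular element $y$ to get $sx=sxy$ and $s(s-x)=s(s-x)y$, add these to obtain $s^2=s^2y$, and conclude uniform $S$-von Neumann regularity with respect to $s^2$. You are slightly more careful than the paper in two places, namely justifying $s-x\in Z(R)$ via $x(s-x)=0$ and checking the zero-divisor case $s^2z=z^2s$ in Step 3, which the paper omits; note, though, that your argument never actually needs $s-x\neq 0$, since $sw(1-y)=0$ holds trivially for $w=0$.
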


\begin{proof}
Let \(x \in Z(R) \setminus \{0\}\). Since \(x \in s\text{-}\idem(R)\), we have \(x^2 = sx\).  
By Proposition~\ref{prop-on-interse-of-some-special-subset}, we also have $s(s-x) = (s-x)^2$.

Now let \(y \in R \setminus Z(R)\). Then both \(xy\) and \((s-x)y\) belong to \(Z(R)^* \subseteq s\text{-}\idem(R)\), so
\[
s xy = (xy)^2 = sxy^2 \quad \text{and} \quad s (s-x)y = ((s-x)y)^2 = s (s-x)y^2.
\]

Since \(y\) is regular, it follows that
\[
sx = sxy \quad \text{and} \quad s(s-x) = s (s-x)y.
\]

Adding these equalities, we obtain
\[
s^2 = sx + s(s-x) = sxy + s(s-x)y = (sx + s(s-x))y = s^2 y.
\]

Hence, for any \(y \in R \setminus Z(R)\), we have \(s^2y = s^2y^2\). Therefore, \(R\) is uniformly \(S\)-von Neumann regular with respect to \(s^2\).
\end{proof}




It is well known that, over a commutative Boolean ring, or more generally over a von Neumann regular ring, every primary ideal is maximal (and hence prime). In Theorem~\ref{pri-max-bool}, we extend this result to the $S$-setting.

Recall from \cite[Definition~2.1]{me22} that a proper ideal \(P\) of a ring \(R\), disjoint from \(S\), is said to be \emph{\(S\)-primary} if there exists \(s \in S\) such that for all \(a, b \in R\), whenever \(ab \in P\), either \(sa \in P\) or \(sb \in \rad(P)\), where \(\rad(P)\) denotes the radical of \(P\).

Also recall that any $S$-maximal ideal is $S$-prime \cite[Proposition 10]{Yil1}.

\begin{thm}\label{pri-max-bool}
Let $R$ be a (uniformly) $S$-Boolean ring and let $P$ be an $S$-primary ideal of $R$. Then $P$ is an $S$-maximal ideal of $R$, and hence $S$-prime.
\end{thm}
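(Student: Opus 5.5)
The plan is to show directly that $P$ is $S$-maximal with respect to the same element $s\in S$ that witnesses $P$ being $S$-primary. The condition $P\cap S=\emptyset$ is part of the definition of $S$-primary, so it holds automatically. The final clause, that $P$ is $S$-prime, then follows from \cite[Proposition 10]{Yil1}. It suffices to treat the $S$-Boolean case, because a uniformly $S$-Boolean ring is $S$-Boolean by Remarks~\ref{rems-on-defn}(1). This also covers idempotents, which are $S$-idempotent with respect to $1$.

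Fix an ideal $I\supseteq P$ and suppose $sI\not\subseteq P$. I will show $I\cap S\neq\emptyset$. Choose $a\in I$ with $sa\notin P$. Since $R$ is $S$-Boolean, there is $t=t_a\in S$ with $ta=a^2$. Then $a(a-t)=0\in P$.

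Applying the $S$-primary condition to the product $a(a-t)$, either $sa\in P$ or $s(a-t)\in\rad(P)$. The first alternative is excluded by the choice of $a$. So $s^n(t-a)^n\in P$ for some $n\geq 1$.

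The key step is to simplify this power using Proposition~\ref{prop-on-interse-of-some-special-subset}(2). Since $a$ is $S$-idempotent with respect to $t$, so is $t-a$, that is, $(t-a)^2=t(t-a)$. By induction, $(t-a)^n=t^{n-1}(t-a)$. Hence
\[
s^nt^{n-1}(t-a)\in P\subseteq I.
\]
On the other hand, $s^nt^{n-1}a\in I$ because $a\in I$. Adding the two gives $s^nt^n\in I$, and $s^nt^n\in S$ since $S$ is multiplicative. So $I\cap S\neq\emptyset$.

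This shows that for every ideal $I\supseteq P$, either $sI\subseteq P$ or $I\cap S\neq\emptyset$, with $s$ independent of $I$. Thus $P$ is $S$-maximal.

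The main obstacle I anticipate is uniformity. The $S$-Boolean hypothesis only gives an element $t_a$ that depends on $a$, while $S$-maximality requires a single fixed element of $S$. The argument avoids this because the fixed element comes from the $S$-primary hypothesis, and $t_a$ appears only inside the element $s^nt_a^n$ of $I\cap S$, which is allowed to vary with $I$. The other point needing care is the passage from the radical condition back into $P$. This is exactly where the identity $(t-a)^n=t^{n-1}(t-a)$ is used, since it turns a power of $t-a$ back into a multiple of $t-a$ itself.
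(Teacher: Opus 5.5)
Your proof is correct and is essentially the paper's argument: you apply the $S$-primary condition to the zero product $a(a-t)$, rule out the first alternative, and extract an element of $S$ lying in $I$ from the radical alternative. The paper phrases this contrapositively, assuming $I\cap S=\emptyset$ and proving $sI\subseteq P$. The only other difference is that the paper simply expands $(s(t-a))^n = s^nt^n + ra$ binomially, so your identity $(t-a)^n=t^{n-1}(t-a)$ from Proposition~\ref{prop-on-interse-of-some-special-subset}(2) is correct but not actually needed to get $s^nt^n\in I$.
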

\begin{proof}
Suppose \(P\) is \(S\)-primary with respect to some \(t \in S\). Let \(Q\) be an ideal such that \(P \subseteq Q\) and \(Q \cap S = \emptyset\).  We claim that \(tQ \subseteq P\).  

Take \(x \in Q\). Since \(R\) is \(S\)-Boolean, there exists \(s \in S\) such that \(x^{2} = sx\).  
Hence, \((s-x)x = 0 \in P\). By the \(S\)-primaryness of \(P\), either \(tx \in P\) or \((t(s-x))^n \in P\) for some \(n \geq 1\).  

Assume that \((t(s-x))^n \in P\). Then, since \(P \subseteq Q\), we also have \((t(s-x))^n \in Q\).  
But note that $(t(s-x))^n = t^n s^n + r x$ for some \(r \in R\).   As \(x \in Q\), it follows that \(t^ns^n = (t(s-x))^n - rx \in Q\).  But \(t^ns^n \in S\), so \(t^ns^n \in Q \cap S\), contradicting the assumption that \(Q \cap S = \emptyset\).  Thus, we must have $tx \in P$.  

Therefore, \(tQ \subseteq P\), which shows that \(P\) is \(S\)-maximal.
\end{proof}

\noindent
By following similar arguments as in the proof of Theorem~\ref{pri-max-bool}, one can show the same result under the weaker assumption that $R$ is uniformly $S$-von Neumann regular (or even just that every element of $R$ is $S$-von Neumann regular).  

We conclude with the following result, which can be seen as an $S$-version of primary decomposition over Boolean rings.

\begin{thm}\label{r3} Assume that $R$ is uniformly $S$-Boolean with respect to some $s \in S$ with $s^2=s$.
If $I$ is an ideal of $R$ disjoint from $S$, then 
\[
s \cdot \bigcap_{\substack{M \supseteq I \\ M \in \Max_{S}(R)}} M \;\subseteq\; I \;\subseteq\; \bigcap_{\substack{M \supseteq I \\ M \in \Max_{S}(R)}} M,
\]
where $\Max_{S}(R)$ denotes the set of all $S$-maximal ideals of $R$.
\end{thm}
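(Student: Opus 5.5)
The right-hand inclusion is trivial, since $I$ lies in every ideal containing it; all the work is in the left-hand inclusion. Write $J=\bigcap_{M\supseteq I,\ M\in \Max_S(R)} M$. I will take $x\in J$ with $sx\notin I$ and construct an $S$-maximal ideal $M\supseteq I$ with $x\notin M$, which is a contradiction. Since $s^2=s$, the element $s$ is an idempotent lying in $S$, and $sx$ is $S$-idempotent with respect to $s$. Indeed, if $x$ is non-idempotent then $x^2=sx$, so $(sx)^2=s\cdot sx$. If $x$ is idempotent then $(sx)^2=sx^2=sx=s\cdot sx$, using $s^2=s$. So the whole argument can be run on $y=sx$, which satisfies $y^2=sy$ and $sy=y$.

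The candidate ideal is $M=I+R(s-y)$, or a maximal enlargement of it. First I check that $y\notin M$. Suppose $y=i+r(s-y)$ with $i\in I$. Multiplying by $y$ and using $y(s-y)=sy-y^2=0$ gives $y^2=iy\in I$. Then $y=sy=y^2\in I$, contradicting $sx\notin I$.

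Next I need an ideal containing $I+R(s-y)$, disjoint from $S$, and $S$-maximal with respect to the fixed $s$. I would apply Zorn's lemma to the family of ideals $Q\supseteq I+R(s-y)$ with $y\notin Q$ and $Q\cap S=\emptyset$. Disjointness from $S$ for the starting ideal should follow from the same trick: if $t=i+r(s-y)\in S$, then $ty=iy$, so $ty\in I$; this is a weaker conclusion, and ruling it out may need $I\cap S=\emptyset$ combined with $y$ being handled appropriately. A maximal member $M$ of this family exists because chains are closed under unions. To see that $M$ is $S$-maximal with respect to $s$, take an ideal $Q\supseteq M$ with $Q\cap S=\emptyset$ and any $z\in Q$. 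Uniform $S$-Booleanness gives $s z$ with $(sz)^2=s\cdot sz$, and I would argue as in the proof of Theorem~\ref{pri-max-bool}. Either $sz\in M$, or $s-sz$ together with $M$ generates an ideal meeting $S$. Maximality of $M$ in the family should force one of $M+Rsz$ and $M+R(s-sz)$ to stay in the family, since their product lies in $M$. This uses that $M$ is prime-like: if $ab\in M$ and both $M+Ra$ and $M+Rb$ contain $y$, then $y=y^2$ up to the factor $s$ lies in $M$.

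Once $M$ is shown to be $S$-maximal, we have $M\supseteq I$ and $M\in\Max_S(R)$, so $x\in M$ and hence $y=sx\in M$, a contradiction. Thus $sJ\subseteq I$.

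The main obstacle is the Zorn step: proving that the maximal member avoiding $y$ and $S$ is genuinely $S$-maximal for the fixed idempotent $s$, and that the starting ideal $I+R(s-y)$ avoids $S$. The idempotence $s^2=s$ and the identity $y(s-y)=0$ are the tools I expect to make both work.
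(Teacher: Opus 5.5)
There is a genuine gap at exactly the step you flag, and it cannot be closed as set up: the ideal $I+R(s-y)$, with $y=sx$ (an idempotent, since $y^2=sy=y$), need not be disjoint from $S$. In fact it meets $S$ if and only if $ty\in I$ for some $t\in S$. One direction is your own computation; for the converse, $ts=ty+t(s-y)$ with $ts\in S$. Nothing excludes $ty\in I$ while $y\notin I$.

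For instance, let $R=\mathbb{Z}_2\times\mathbb{Z}_2$, $S=\{(1,1),(1,0)\}$, $s=(1,1)$ (here $R$ is Boolean, so the hypotheses hold), $I=0$ and $x=(0,1)$. Then $y=(0,1)\notin I$, but $s-y=(1,0)\in S$. Moreover, whenever $ty\in I$, \emph{every} ideal containing $I$ and $s-y$ contains $ts\in S$, so no enlargement can help. The $S$-maximal ideal that must exclude $y$ does not contain $s-y$: in the example it is the zero ideal, which is $S$-maximal with respect to $(1,0)$. Since you use $x\in J$ only in the last line, your construction has to work for every $x$ with $sx\notin I$, and it does not.

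The fix is a case split.

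\emph{Case $ty\notin I$ for all $t\in S$.} Here your route works, and adding $s-y$ is exactly what makes it work. Let $M$ be maximal among ideals containing $I+R(s-y)$ and avoiding $y$ and $S$, and suppose $ab\in M$ with $a,b\notin M$. Then $M+Ra$ and $M+Rb$ each contain $y$ or meet $S$.
\begin{itemize}
\item If both contain $y$, then $y=y^2\in M$.
\item If both meet $S$, the product of the two elements of $S$ lies in $M\cap S$.
\item In the mixed case, $uy\in M$ for some $u\in S$, whence $us=uy+u(s-y)\in M\cap S$.
\end{itemize}
So $M$ is prime, hence $S$-primary, hence $S$-maximal by Theorem~\ref{pri-max-bool}. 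This replaces your vague $S$-maximality sketch, which skips precisely the mixed case.

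\emph{Case $ty\in I$ for some $t\in S$.} Drop $s-y$ and take $M$ maximal among ideals containing $I$ and avoiding $y$ and $S$. Then $M$ is $S$-prime with respect to $t$. Indeed, suppose $ab\in M$ but $ta,tb\notin M$. If $y=m+rta$ with $m\in M$, then $y=y^2=my+ra(ty)\in M$, which is impossible; the same holds for $b$. So $M+Rta$ and $M+Rtb$ both meet $S$, and the product of those elements lies in $M\cap S$, a contradiction. Theorem~\ref{pri-max-bool} again gives $S$-maximality, and hence the contradiction $y\in M$.

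For comparison, the paper uses this second family throughout, so disjointness from $S$ is automatic. However, its primeness step tacitly assumes that $M+Rsa$ contains $sx$, rather than possibly meeting $S$. In the example above, that family has maximal member $0$, which is not $S$-prime with respect to $s=(1,1)$. So a case distinction of this kind is needed on either route.
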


\begin{proof}
 Set
\[
J = \bigcap_{\substack{M \supseteq I \\ M \in \Max_{S}(R)}} M.
\]
Clearly, $I \subseteq J$. We claim that $sJ \subseteq I$. Suppose, for contradiction, that $sJ \nsubseteq I$. Then there exists $x \in J$ with $sx \notin I$. Consider the family of ideals
\[
\mathcal{F} = \{\, K \subseteq R \mid I \subseteq K, \; sx \notin K, \; K \cap S = \emptyset \,\}.
\]
Clearly, $\mathcal{F} \neq \emptyset$ since $I \in \mathcal{F}$. Moreover, every chain in $\mathcal{F}$ has an upper bound in $\mathcal{F}$ (the union). By Zorn's lemma, $\mathcal{F}$ has a maximal element, say $M$.

We now show that $M$ is an $S$-prime ideal. Let $a, b \in R$ with $ab \in M$. If neither $sa \in M$ nor $sb \in M$, then by maximality of $M$ in $\mathcal{F}$ we must have
\[
sx \in M + Rsa \quad \text{and} \quad sx \in M + Rsb,
\]
so
\[
sx = m + r_1 sa \quad \text{and} \quad sx = m' + r_2 sb
\]
for some $m, m' \in M$ and $r_1, r_2 \in R$. Hence $sx^2 \in M$. Since \(R\) is uniformly \(S\)-Boolean, we have either \(x^2 = sx\) or \(x^2 = x\).  In both cases, it follows that \(sx \in M\), contradicting the fact that  \(M\in \mathcal{F}\). Therefore, either $sa \in M$ or $sb \in M$, and thus $M$ is $S$-prime with respect to $s$.

By Theorem~\ref{pri-max-bool}, in a uniformly $S$-Boolean ring every $S$-prime ideal is $S$-maximal. Hence $M$ is $S$-maximal. But then $sx \in J \subseteq M$, which contradicts $sx \notin M$.  

Therefore, $sJ \subseteq I$, completing the proof.
\end{proof}

Driss Bennis:  Faculty of Sciences, Mohammed V University in Rabat, Rabat, Morocco.

\noindent e-mail address: driss.bennis@um5.ac.ma; driss$\_$bennis@hotmail.com

Ayoub Bouziri: Faculty of Sciences, Mohammed V University in Rabat, Rabat, Morocco.

\noindent e-mail address: ayoubbouziri66@gmail.com

Shiv Datt Kumar: Department of Mathematics, Motilal Nehru National Institute of Technology Allahabad, Prayagraj-211004, India

\noindent e-mail address: sdt@mnnit.ac.in

Tushar Singh(Corresponding Author): Department of Mathematics, Motilal Nehru National Institute of Technology Allahabad, Prayagraj-211004, India\\
\noindent e-mails address: sjstusharsingh0019@gmail.com, tushar.2021rma11@mnnit.ac.in, 
\end{document}